\providecommand{\U}[1]{\protect\rule{.1in}{.1in}}
\newtheorem*{defin*}{Definition}
\newtheorem{teo}{Theorem}[section]
\newtheorem{proposition}{Proposition}[section]
\newtheorem{cor}{Corollary}[section]
\newtheorem{defin}{Definition}[section]
\newtheorem{rem}{Remark}
\newtheorem{remarks}{Remarks}
\numberwithin{equation}{section}
\begin{document}

	\pagenumbering{arabic}	
\title[Control for Fractional Schr\"odinger equation]{The fractional Schr\"odinger equation on compact manifolds: Global controllability results}
\author[Capistrano-Filho]{Roberto de A. Capistrano-Filho*}
\address{Departamento de Matem\'atica,  Universidade Federal de Pernambuco (UFPE), 50740-545, Recife (PE), Brazil.}
\email{roberto.capistranofilho@ufpe.br}
\author[Pampu]{Ademir B. Pampu}
\address{Departamento de Matem\'atica,  Universidade Federal de Pernambuco (UFPE), 50740-545, Recife (PE), Brazil.}
\email{ademir\_bp@hotmail.com}
\thanks{*Corresponding author: roberto.capistranofilho@ufpe.br}
\subjclass[2020]{35Q55; 93B05; 93D15; 35A21; 35R11; 35S05} 
\keywords{Global controllability, Global stabilization, Fractional Schr\"odinger equation, Pseudo-differential calculus}

\begin{abstract}
The goal of this work is to prove global controllability and stabilization properties for the fractional Schr\"odinger equation on $d$-dimensional compact Riemannian manifolds without boundary $(M,g)$. To prove our main results we use techniques of  pseudo-differential calculus on manifolds. More precisely, by using microlocal analysis, we are able to prove propagation of regularity which together with the so-called \textit{Geometric Control Condition} and \textit{Unique Continuation Property} help us to prove global control results for the system under consideration. As a main novelty this manuscript presents the relation between the geometric control condition and the controllability for the fractional Schr\"odinger equation.
\end{abstract}
\maketitle

\section{Introduction}
\subsection{Presentation of the model} 
The fractional Schr\"odinger equation is a fundamental equation of fractional quantum mechanics. It was discovered by Laskin \cite{Laskin2000,Laskin2002} as a result of extending the Feynman path integral, from the Brownian-like to L\'evy-like quantum mechanical paths. A path integral over the L\'evy-like quantum-mechanical paths results in a generalization of quantum mechanics. Additionally, the fractional Schr\"odinger equation plays an important role in the study of controllability problems for three dimensional water wave systems, as it is shown in \cite{Zhu}.

Our work will treat the global controllability and stabilization properties of the generalized fractional Schr\"odinger equation on a compact Riemannian manifold $(M, g)$ without boundary, namely
\begin{equation}\label{prob-intr}
\begin{cases}
i \partial_{t} u+\Lambda^{\sigma}_{g}u=0,& \quad \text{ on } M\times ]0, T[,
\\ u(x,0)=u_{0}(x),& \quad  x\in M,
\end{cases}
\end{equation}
where $\sigma\in[2,\infty)$, $M$ is a compact Riemannian manifold with dimension $d<[\sigma] +1$ and $\Lambda_{g}^{\sigma}$ is defined by $\left(\sqrt{-\Delta_{g}}\right)^{\sigma}$. Here, $\Delta_{g}$ is the Laplace–Beltrami operator associated to the metric $g$ and $[\sigma]$ is the integer part of $\sigma$.

It is important to point out that equation \eqref{prob-intr} can be seen as a generalization of the Schr\"odinger equation when $\sigma=2$ (see e.g. \cite{Cazenave2003,Tao}) or of the fourth-order Schr\"odinger equation if $\sigma=4$ (see, for instance, \cite{Pausader2007,Pausader2009}).

\subsection{Setting of the problem}\label{setting} As mentioned before, our goal is to study global properties of stabilization and, consequently, controllability for the generalized nonlinear fractional Schr\"odinger equation on a compact Riemannian manifold. More precisely, we deal with the following system 
\begin{equation}\label{prob-introa}
\begin{cases}
i \partial_{t} u+\Lambda^{\sigma}_{g}u +P'(|u|^{2})u=0,&\quad \text{ on } M\times \mathbb{R}_{+},
\\ u(x,0)=u_0(x),&\quad x\in M.
\end{cases}
\end{equation}
Here, we consider $P$ as a polynomial function with real coefficients and $P'$ its derivative.

In order to determine if the system \eqref{prob-introa} is controllable in large time for a control supported in a small open subset of $M$, we will study equation \eqref{prob-introa} from a control point of view with a forcing term $h=h(x,t)$ added to the equation as a control input 
\begin{equation}\label{prob-intro_aa}
\begin{cases}
i \partial_{t} u+\Lambda^{\sigma}_{g}u +P'(|u|^{2})u=h,&\quad \text{ on } M\times \mathbb{R}_{+},
\\ u(x,0)=u_0(x),&\quad x\in M,
\end{cases}
\end{equation}
where $h$ is assumed to be supported in  a given open subset $\omega \subset M$. Therefore,  the following classical issues  related with the control theory are considered in this work:

\vglue 0.2 cm
\noindent\textbf{Exact control problem:}  Given an initial state $u_0$ and a terminal state $v_{0}$, in a certain space, can one find an appropriate control input $h$ such that equation \eqref{prob-intro_aa} admits a solution $u$ satisfying $u(\cdot,0)= u_0$ and $u(\cdot,T)= v_{0}$?

\vglue 0.2 cm
\noindent\textbf{Stabilization problem:}  Can one find a feedback control law $h=h(x,t)$ such that system \eqref{prob-intro_aa} is asymptotically stable as $t\to\infty $?

\subsection{State of the art} 
When \eqref{prob-intro_aa} is considered on a general compact Riemannian manifold $M$ is not of our knowledge any result about control theory. However, there are interesting results in different domains when one considers $\sigma=2$ or $\sigma=4$ on $\Lambda_{g}^{\sigma}=\left(\sqrt{-\Delta_{g}}\right)^{\sigma}$,  namely, nonlinear Schr\"odinger equation (NLS) and fourth-order nonlinear Schr\"odinger equation (4NLS), respectively. We will give  a brief state of the arts to the reader, precisely, we will present a sample of the control results for both cases.

\subsubsection{Previous results for NLS} 
In what concerns the NLS equation on general compact Riemannian manifolds, that is, system \eqref{prob-intro_aa} with $\sigma = 2$, the first difficulty arise in proving 
the uniform well-posedness since the Strichartz estimates does not hold globally in time and it has a loss of derivatives when compared with the euclidean case. However, this issue is now well understood since the interesting article due to Burq \textit{et al.}
\cite{Burq-Gerard-Tz} (for an excellent review of this topic the authors suggest that the reader see \cite{Burq-Gerard-Tz} and the references therein).

Taking advantage of the Strichartz estimates and propagation results of microlocal defect measures it was proved by Dehman \textit{et al.} \cite{dehman-gerard-lebeau} that the celebrated \textit{Geometric Control Condition} (GCC) was sufficient to get controllability results for the NLS on two dimensional compact Riemannian manifolds. More precisely, in \cite{dehman-gerard-lebeau}, the authors showed that the following assumptions are sufficient to get exact controllability result for solutions of \eqref{prob-intro_aa} when
$\sigma = 2$: 
\begin{itemize}
       \item[(A)] The control function $h$ is effective on an open set $\omega$ which geometrically controls \(M\); i.e. there exists \(T_{0}>0,\) such that every geodesic of \(M\)
traveling with speed 1 and issued at \(t=0,\) enters the set \(\omega\) in a time \(t<T_{0}\) .
       
       \item[(B)] For every \(T>0,\) the only solution lying in the space \(C([0, T], H^{1}(M) )\) of the system 
       \begin{equation*}
       \begin{cases}
i \partial_{t} u+\Delta_{g}u+b_{1}(x,t) u+b_{2}(x,t) \overline{u} = 0,& \quad \text{ on } M\times(0,T),\\
u=0,& \quad \text{ on } \omega\times( 0, T) ,\end{cases}
\end{equation*}
where $b_{1}, b_{2}  \in L^{\infty}\left(0, T, L^{p}(M)\right)$ for some \(p>0\), is the trivial
one \(u \equiv 0\).
\end{itemize}
\begin{rem}
Assumption $(A)$ is the so called geometric control condition (see e.g. \cite{BaLeRa92}) and $(B)$ is a unique continuation condition.  It is worth mentioning that as provided by Maci\`a in \cite{Macia2,Macia}, under certain geometric assumptions, $(A)$ is equivalent to assumption $(B)$ when the potential does not depend on the time variable, that is, $V(x,t) = V(x)$, and recently those results where expanded to the fractional Schr\"odinger equation in \cite{Macia3}.
\end{rem}

Considering compact Riemannian manifolds of dimension $d \geq 3$, Strichartz estimates does not yield uniform well posedness result at the energy level for the NLS equation, property which seems to be very important to prove controllability results. In this way, Burq \textit{et al.} in two works \cite{Burq1,Burq2} managed to introduce the Bourgain spaces $X^{s, b}$ on certain manifolds without boundary where the bilinear Strichartz estimates can be showed and, consequently, they get the uniform well-posedness for the NLS equation. Taking advantage of these results, Laurent \cite{Laurent-siam} proved that (GCC)  is sufficient to prove the exact controllability for the NLS in $X^{s, b}$ spaces on three-dimensional Riemannian compact manifolds. Similarly of this work, we mention \cite{RoZha} and \cite{Laurent-esaim} where controllability results were studied for the NLS in euclidean and periodic domains, respectively, relying on the properties of the Bourgain spaces. Finally, observe that in the following three works \cite{Burq3,Jaffard,KoLo} the authors showed that although (GCC) is sufficient to get controllability results to the NLS equation, it is not necessary.

\subsubsection{ Previous results for 4NLS}
Considering  $\sigma=4$, let us denote the fourth-order Schr\"odinger equation as follows
\begin{equation}
\label{fourthlin}
i\partial_tu +\Delta^2u=0.
\end{equation}
There are interesting results for equation \eqref{fourthlin} in the sense of control problems in a bounded domain of $\mathbb{R}$ or $\mathbb{R}^n$ and, more recently, on a periodic domain $\mathbb{T}$ which we will summarize below.

The first result about the exact controllability of the linearized fourth order Schr\"odinger equation \eqref{fourthlin} on a bounded domain $\Omega$ of $\mathbb{R}^n$ is due to Zheng and  Zhongcheng in \cite{zz}. In this work, by means of an $L^2$--Neumann boundary control, the authors proved that the solution is exactly controllable in $H^s(\Omega)$, $s=-2$, for an arbitrarily small time. They used Hilbert Uniqueness Method (HUM) (see, for instance, \cite{DolRus1977,lions1}) combined with the multiplier techniques to get the main result of the article. More recently,  in \cite{zheng}, Zheng proved another interesting problem related with the control theory. To do this, he showed a global Carleman estimate for the fourth order Schr\"odinger equation posed on a finite domain. The Carleman estimate is used to prove the Lipschitz stability for an inverse problem consisting in retrieving a stationary potential in the Schr\"odinger equation from boundary measurements.

Still on control theory Wen \textit{et al.} in two works \cite{WenChaiGuo1,WenChaiGuo}, studied well-posedness and control problems
related with the equation \eqref{fourthlin} on a bounded domain of $\mathbb{R}^n$, for $n\geq2$. In \cite{WenChaiGuo1}, they considered the Neumann boundary controllability with collocated observation. With this result in hand, the exponential stability of the closed-loop system under proportional output feedback control holds. Recently, the same authors, in \cite{WenChaiGuo}, gave  positive answers when considered the equation with hinged boundary by either moment or Dirichlet boundary control and collocated observation, respectively.

To get a general outline of the control theory already done for the system \eqref{fourthlin}, two interesting problems were studied recently by Aksas and Rebiai \cite{AkReSa} and Gao \cite{Peng}: Uniform stabilization and stochastic control problem, in a smooth bounded domain $\Omega$ of $\mathbb{R}^n$ and on the interval $I=(0,1)$ of $\mathbb{R}$, respectively. In the first work, by introducing suitable dissipative boundary conditions, the authors proved that the solution decays exponentially in $L^2(\Omega)$ when the damping term is effective on a neighborhood of a part of the boundary. The results are established by using multiplier techniques and compactness/uniqueness arguments. Regarding the second work, above mentioned, the author showed Carleman estimates for forward and backward stochastic fourth order Schr\"odinger equations which provided the proof of the observability inequality,  unique continuation property and, consequently,  the exact controllability for the forward and backward stochastic system associated with \eqref{fourthlin}.

Lastly, in \cite{CaCa}, the first author showed the global stabilization and exact controllability  properties of the fourth order nonlinear Schr\"odinger system 
\begin{equation}\label{fourthC}
\begin{cases}
i\partial_{t}u +\partial_{x}^{2}u-\partial_x^4u
=\lambda |u|^2u +f,& (x,t)\in \mathbb{T}\times \mathbb{R},\\
u(x,0)=u_0(x),& x\in \mathbb{T},\end{cases}
\end{equation}
on a periodic domain $\mathbb{T}$ with internal control supported on an arbitrary sub-domain of $\mathbb{T}$. More precisely, by certain properties of propagation of compactness and regularity in Bourgain spaces, for the solution of the associated linear system, the authors proved that system \eqref{fourthC} is globally exponentially stabilizable. This property together with the local exact controllability ensures that 4NLS is globally exactly controllable on $\mathbb{T}$.

\subsubsection{Additional comments of Schrödinger-like equation}
It is interesting to mention that if we consider the following control problem in $L^{2}\left(\mathbb{T}^{2}\right)$:
\begin{equation}\label{D}
\partial_{t} u+i|D|^{\sigma} u= \varphi F, \quad 1 \leq \sigma < 2,
\end{equation}
where $\varphi \in C^{\infty}\left(\mathbb{T}^{2}\right)$, $|D|^{\sigma}$ is the fractional laplacian defined in $\mathbb{T}^{2}$ and  $F \in C([0, T], L^{2}(\mathbb{T}^{2}))$ is complex valued, we still have some interesting results in the literature. When $\sigma=1$, system \eqref{D} is so-called \textit{(half) wave equation}. We have that system \eqref{D} is exactly controllable in the sense defined in section \ref{setting}, if and only if the control function $\varphi F$ is supported in a region where the (GCC) is satisfied, and the proof can be adapted using the ideas of \cite{BaLeRa92,Burq-Gerard,dehman-Rousseau-Leautaud}. 

Still regarding the control problems to the system \eqref{D}, to finish this small sample of state of the art, in an interesting article, Zhu \cite{Zhu} studied the exact controllability for spatially periodic water waves with surface tension, by localized exterior pressures applied to free surfaces.  He showed that in any dimension, the exact controllability for \eqref{D} holds within an arbitrarily short time, for sufficiently small and regular data, provided that the region of control satisfies the (GCC). Additionally, was obtained that in the middle of the two typical cases $1<\alpha<2$, (GCC) is necessary to prove the exactly controllability of \eqref{D} on $\mathbb{T}^{2}$.

\subsection{Main results}
Let us introduce the issues addressed  in this work. We want to study the stabilization and exact controllability for the generalized nonlinear fractional Schr\"odinger equation on compact Riemannian manifolds
\begin{equation}\label{prob-intro}
\begin{cases}
i \partial_{t} u+\Lambda^{\sigma}_{g}u +P'(|u|^{2})u=0,& \quad \text{on } M\times ] 0, T[,
\\ u(x,0)=u_0(x),&\quad x\in M.
\end{cases}
\end{equation}
Remember that $M$ is a compact Riemannian manifold without boundary of dimension $d<[\sigma] +1$, $\Lambda_{g}^{\sigma}$ is defined by $\left(\sqrt{-\Delta_{g}}\right)^{\sigma}$, with $\sigma\in[2,\infty)$, and $\Delta_{g}$ is the Laplace–Beltrami operator associated to the metric $g$. 

In \eqref{prob-intro}, $P$ is a polynomial function with real coefficients and $P'$ its derivative satisfying the following two properties
\begin{equation}\label{p1}
P(0) = 0\quad \text{ and }  \quad P(r) \to \infty \text{ as }r \to \infty
    \end{equation}
    and 
    \begin{equation}\label{p2}
P'(r) \geq C > 0, \quad \text{ for every} \quad r\geq0.
    \end{equation}
The condition \eqref{p1} means that the nonlinear term in \eqref{prob-intro} is defocusing and once we have it we can assume \eqref{p2} without loss of generality. Indeed, changing the unknown function $u(\cdot,t)$ in \eqref{prob-intro} by $e^{i \lambda t}u(\cdot,t)$, the corresponding problem has the  nonlinear term $P'$ replaced by $P' + C$.

Note that for  $u_{0}\in H^{\frac{\sigma}{2}}(M)$ system \eqref{prob-intro} admits a unique solution $u \in C([0,+\infty), H^{\frac{\sigma}{2}}(M)$). This solution satisfies some integrability properties and Strichartz estimates which will be detailed in the next section. Additionally, equation \eqref{prob-intro} displays two energy levels, namely:  $L^2$ energy (or mass) and the nonlinear energy or $H^{\frac{\sigma}{2}}-$energy, given by
\begin{eqnarray}\label{Func-En}
	E(t) = \int_{M}{|\Lambda^{\frac{\sigma}{2}}_{g}u(t)|^{2}}dx + \int_{M}{P(|u(t)|^{2})}dx.
 \end{eqnarray}
 
Our first result concerns global stabilization. To introduce the problem, let $a = a(x) \in C^{\infty}(M)$ be a real valued nonnegative function and consider the system
\begin{equation}\label{prob-diss1}
                \begin{cases}
i \partial_{t} u  + \Lambda^{\sigma}_{g}u  + P'(|u|^{2})u - a(x)(1 - \Delta_{g})^{-\frac{\sigma}{2}}a(x)\partial_{t}u =0, & \text{ on } M\times \mathbb{R}_{+}, \\
u(x,0)=u_0(x),& x\in M.
        \end{cases}
\end{equation}
Also, pick $\omega$  an open subset of $M$ and consider the two following assumptions, that were already mentioned earlier:

\vspace{0.1cm}
\noindent\textbf{$(\mathcal{A})$} \quad \(\omega\) geometrically controls \(M\), i.e there exists \(T_{0}>0\) , such that every geodesic of \(M,\) travelling with speed 1 and issued at \(t=0\) , enters the set \(\omega\) at a time \(t<T_{0}\) .

\vspace{0.1cm}

\noindent\textbf{$(\mathcal{B})$} \quad  For every \(T>0\), the only solution lying in the space $C( ]0,T[, H^{\frac{\sigma}{2}}(M))$ of the system
        \begin{equation*}
        \begin{cases}
        i \partial_{t} u+\Lambda^{\sigma}_{g}u + b_{1}(t, x) u+b_{2}(t, x) \overline{u}=0,& \text{ on } M\times ] 0, T[ ,\\
        u=0,&  \text{ on } \omega\times ] 0, T[,
        \end{cases}
\end{equation*}
where $b_{1}, b_{2} \in C^{\infty}([0, T] \times M)$, is the trivial one $u \equiv 0$.

In what follows, $\omega$ will be related to a cut-off nonnegative function $a = a(x)\in C^{\infty}(M)$ (whose existence is guaranteed by the Whitney theorem) taking real values and such
that
\begin{equation}\label{damped}
\omega= \{x \in M : a(x) \neq 0\}.
\end{equation}

Therefore, our strategy is first to  prove  that system \eqref{prob-diss1} is well-posed in $C([0,+\infty), H^{\frac{\sigma}{2}}(M))$. With this in hand, it is easy to check that its unique solution $u = u(x,t)$ satisfies the energy identity:
\begin{equation}\label{ene}
E(t_{2})- E(t_{1})=-2\int_{t_{1}}^{t_{2}}{\|(1 - \Delta_{g})^{-\frac{\sigma}{2}}a(x)\partial_{\tau}u(\tau) \|_{L^{2}(M)}^{2}}d\tau, \quad\forall t_{2} \geq t_{1} \geq0.
\end{equation}
Observe that for $a(x)=0$ we have, by \eqref{ene}, the mass of the system is indeed conserved. However, assuming the condition \eqref{damped} on some nonempty open set $\omega$ of $M$, identity \eqref{ene} states that we have a possibility of an exponential decay of the solutions related of \eqref{prob-diss1}. In fact, by using the techniques of microlocal analysis, the result that we are able to prove, for large data, can be read as follows:

\begin{teo}[Stabilization] \label{teo-stab}
Let us consider the assumptions $(\mathcal{A})$, $(\mathcal{B})$ and \eqref{damped} as  above. For every $R_0>0$, there exist two constants $C := C(R_{0})>0$ and $\gamma>0$ such that the inequality
\begin{equation}\label{exp}
\|u(t)\|_{H^{\frac{\sigma}{2}}} \leq C e^{-\gamma t}\left\|u_{0}\right\|_{H^{\frac{\sigma}{2}}}, t \in \mathbb{R}_{+}
\end{equation}
holds for every solution $u=u(x,t)$ of the damped system \eqref{prob-diss1} with initial data $u_0$ satisfying $$\|u_{0}\|_{H^{\frac{\sigma}{2}}}\leq R_0.$$
\end{teo}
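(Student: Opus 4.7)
The strategy is the classical observability-implies-stabilization scheme adapted to the fractional setting. From the energy identity \eqref{ene}, the decay \eqref{exp} will follow by standard semigroup iteration once I establish, for some $T = T(R_0) > 0$ and $C = C(R_0) > 0$, the observability estimate
\begin{equation*}
\|u_0\|^2_{H^{\sigma/2}(M)} \leq C \int_0^T \left\|(1-\Delta_g)^{-\sigma/2} a(x)\, \partial_t u(\tau)\right\|^2_{L^2(M)} d\tau
\end{equation*}
for every solution $u$ of \eqref{prob-diss1} with $\|u_0\|_{H^{\sigma/2}} \leq R_0$. Indeed, \eqref{ene} then yields a uniform contraction $E(u)(T) \leq \rho\, E(u)(0)$ for some $\rho \in (0,1)$, and iterating on intervals of length $T$ delivers \eqref{exp}.

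I would prove this observability by a compactness–uniqueness contradiction. Suppose it fails; then there is a sequence $(u_n)$ of solutions to \eqref{prob-diss1} with $\beta_n := \|u_n(0)\|_{H^{\sigma/2}} \leq R_0$ for which $\alpha_n := \int_0^T \|(1-\Delta_g)^{-\sigma/2} a(x)\,\partial_t u_n\|^2_{L^2}\, d\tau$ tends to $0$ while $\beta_n \to \beta \in (0, R_0]$. The Strichartz estimates of the preceding section, available thanks to the dimensional constraint $d < \sigma + 1$, make $(u_n)$ bounded in $C([0,T], H^{\sigma/2})$ together with the relevant admissible space-time Lebesgue space, so after extraction $u_n \rightharpoonup u$ weakly in $H^{\sigma/2}$ and $u_n \to u$ strongly in $L^2([0,T]\times M)$.

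The first alternative is $u \not\equiv 0$: passing to the limit one gets a solution of the conservative nonlinear equation with $a(x)\partial_t u \equiv 0$, hence $\partial_t u = 0$ on $\omega$. Differentiating the equation in time, $v := \partial_t u$ satisfies a linear fractional Schr\"odinger equation with smooth coefficients formed from $P'$ and $P''$ applied to $|u|^2$, together with $v|_{\omega\times(0,T)} = 0$, so assumption $(\mathcal{B})$ forces $v \equiv 0$; thus $u$ is stationary and solves $\Lambda_g^\sigma u + P'(|u|^2)u = 0$, which by \eqref{p2} and pairing with $\bar u$ in $L^2$ yields $u \equiv 0$, a contradiction. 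The second alternative is $u \equiv 0$: setting $v_n := u_n/\beta_n$ gives $\|v_n(0)\|_{H^{\sigma/2}} = 1$, and each $v_n$ solves a linear fractional Schr\"odinger equation whose potentials (built from $P'(|u_n|^2)$) are uniformly bounded and, since $u_n \to 0$ strongly, converge to the constant $P'(0)$ in a sufficiently strong topology. Introducing the microlocal defect measure $\mu$ associated with the $H^{\sigma/2}$-bounded sequence $(v_n)$, a propagation of singularities theorem — which applies because the principal symbol of $\Lambda_g^\sigma$ is $|\xi|_g^\sigma$, with Hamiltonian flow projecting onto the geodesics of $(M,g)$ — shows $\mu$ is invariant under the geodesic flow; the vanishing of $\alpha_n$ forces $\mu$ to have no mass above $\omega$, and the geometric control condition $(\mathcal{A})$ then propagates this vanishing to the whole of $T^*M \setminus 0$, contradicting $\|v_n(0)\|_{H^{\sigma/2}} = 1$.

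The main obstacle I foresee is the propagation of singularities step for $\Lambda_g^\sigma$: one must develop the appropriate pseudo-differential calculus on $(M,g)$, compute the relevant sub-principal symbol, and verify that the microlocal defect measure is transported along the lifted geodesic flow despite the fractional exponent $\sigma$. A second technical hurdle is treating the polynomial nonlinearity as a genuinely compact perturbation uniformly in $n$, which must be done consistently with the Strichartz estimates adapted to the fractional scaling of $\Lambda_g^\sigma$.
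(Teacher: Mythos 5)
Your proposal follows essentially the same route as the paper: reduction to an observability inequality via the energy identity \eqref{ene}, a compactness--uniqueness contradiction with the degenerate case handled by renormalization, unique continuation obtained by time-differentiating the limit equation and invoking $(\mathcal{B})$, and propagation of a microlocal defect measure along geodesics (using that $\{b,|\xi|_x^{\sigma}\}=\sigma|\xi|_x^{\sigma-1}\{b,|\xi|_x\}$, so on $S^{*}M$ the bicharacteristic flow of $|\xi|_x^{\sigma}$ is the geodesic flow) combined with $(\mathcal{A})$. The two ``obstacles'' you defer are exactly the paper's appendix results --- the propagation-of-compactness proposition for $L=i\partial_t+\Lambda_g^{\sigma}$ and the strong $L^{2}(0,T;H^{\frac{\sigma}{2}})$ convergence of $Q(|u_n|^{2})u_n$ via Strichartz plus interpolation, together with a Bahouri--G\'erard bootstrap to bound the renormalized sequence in $Y_T$ --- so your outline is correct, the only organizational difference being that the paper splits on whether $\lim (Eu_n(0))^{1/2}$ vanishes rather than on whether the weak limit does.
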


Now, to give an answer to the global control problem, we need first to prove a local exact controllability result and to combine it with a global stabilization (Theorem \ref{teo-stab}) of the solutions to get the global controllability of the following system
\begin{equation}\label{Ncontrolsys-int} 
\begin{cases}
i \partial_{t} u  + \Lambda^{\sigma}_{g}u  + P'(|u|^{2})u = h(x,t),&   \text{ on } M\times ] 0, T[, \\
u(x,0)=u_0(x),& x\in M.
\end{cases}
\end{equation}
Thus, in this spirit, we will prove the control property of \eqref{Ncontrolsys-int} near to $0$, that will be proved using a perturbation argument introduced by Zuazua in \cite{Zuazua}. To be precise, we will show the following local controllability result:

\begin{teo}[Local controllability]\label{main1}
Let $\omega \subset M$ be an open set satisfying assumption $ (\mathcal{A})$. There exists $\epsilon>0$ such that for any  $u_{0}\in H^{\frac{\sigma}{2}}(M)$  with
\begin{eqnarray}
\|u_{0}\|_{H^{\frac{\sigma}{2}}(M)} \leq \epsilon,
\end{eqnarray} 
one can find a control input $h(x,t):=h \in C([0, T]; H^{\frac{\sigma}{2}}(M))$, with support in $[0, T] \times \omega$, such that 
the unique solution $u \in C([0, T]; H^{\frac{\sigma}{2}}(M))$ of the system \eqref{Ncontrolsys-int}   satisfies $u(\cdot,T) = 0$. 
\end{teo}

Finally, with Theorems \ref{teo-stab}  and \ref{main1} in hand, the following  global exact controllability result for the generalized fractional Schr\"odinger equation is established.

\begin{teo}[Global controllability]\label{controle}
Let $\omega \subset M$ be an open set satisfying assumptions $(\mathcal{A})$ and $(\mathcal{B})$. Then, for every $R_{0} > 0$, 
there exists $T(R_0):=T > 0$ such that for every data 
$u_{0}$ and $v_{0}$ in $H^{\frac{\sigma}{2}}(M)$  satisfying 
\begin{eqnarray}
\|u_{0}\|_{H^{\frac{\sigma}{2}}(M)} \leq R_{0} \quad \text{ and } \quad \|v_{0}\|_{H^{\frac{\sigma}{2}}(M)} \leq R_{0},
\end{eqnarray} 
there exists a control $h \in C([0, T]; H^{\frac{\sigma}{2}}(M))$, with support in $[0, T] \times \omega$, such that 
the unique solution $u \in C([0, T]; H^{\frac{\sigma}{2}}(M))$ of the system 
\begin{equation} \label{prob-diss12}
\begin{cases}
i \partial_{t} u  + \Lambda^{\sigma}_{g}u  + P'(|u|^{2})u =  h,&   \text{ on } M\times ] 0, T[, \\
u(x,0)=u_0(x),& x\in M,
\end{cases}
\end{equation}
 satisfies $u(\cdot,T) = v_{0}$. 
 
 Additionally, if $u_{0}$ and $v_{0} \in H^{s}(M)$, for $s > \frac{\sigma}{2}$, are small enough, we have that $h \in C([0, T]; H^{s}(M))$ and $u \in C([0, T]; H^{s}(M))$. 
\end{teo}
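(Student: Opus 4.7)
The strategy I would follow is the classical three-piece scheme: global stabilization pulls any state of size at most $R_0$ into a small $H^{\sigma/2}$-ball, time reversibility extends this to a backwards-in-time stabilization of the target, and a local exact controllability result near $0$ bridges the two. So the plan decomposes as follows.

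First, I would establish a local exact controllability result near the origin at the energy level: there exist $\delta>0$ and $T_{\mathrm{loc}}>0$ such that for every $u_{0}^{*},v_{0}^{*}\in H^{\sigma/2}(M)$ with $\|u_{0}^{*}\|_{H^{\sigma/2}},\|v_{0}^{*}\|_{H^{\sigma/2}}\le\delta$, there is a control $h_{\mathrm{loc}}\in C([0,T_{\mathrm{loc}}];H^{\sigma/2}(M))$ supported in $[0,T_{\mathrm{loc}}]\times\omega$ driving the nonlinear equation \eqref{prob-diss12} from $u_{0}^{*}$ to $v_{0}^{*}$. To prove this, I would first address the linearized equation $i\partial_{t}u+\Lambda^{\sigma}_{g}u=Gh$ (with $G$ a suitable bounded operator acting essentially as multiplication by the cutoff $a(x)$) and obtain its exact controllability via HUM. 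The crucial point is the observability inequality for the adjoint, which I would derive from the propagation of singularities for $i\partial_{t}+\Lambda^{\sigma}_{g}$ established earlier in the paper (yielding propagation along the Hamiltonian flow of the principal symbol, which up to reparametrization is the geodesic flow): assumption $(\mathcal{A})$ rules out microlocal defect measures outside $\omega$, while assumption $(\mathcal{B})$ removes the $L^{2}$ compactness remainder via a uniqueness/compactness argument. Once linear observability is in hand, I would transfer the result to the nonlinear equation by a contraction/fixed-point scheme in a ball of $C([0,T_{\mathrm{loc}}];H^{\sigma/2})$, using the Strichartz estimates and well-posedness framework already cited in the paper to control the $P'(|u|^{2})u$ term.

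Next, I would invoke Theorem~\ref{teo-stab}: for initial data of size $\le R_{0}$, the damped solution of \eqref{prob-diss1} satisfies $\|u(t)\|_{H^{\sigma/2}}\le Ce^{-\gamma t}R_{0}$, so there exists $T_{\mathrm{stab}}>0$ with $\|u(T_{\mathrm{stab}})\|_{H^{\sigma/2}}\le\delta/2$. Rewriting the damping term as a forcing supported in $\omega$ (namely $\tilde h_{1}:=ia(x)(1-\Delta_{g})^{-\sigma/2}a(x)\partial_{t}u$), this gives a control steering $u_{0}$ to some small state $u_{0}^{*}\in H^{\sigma/2}(M)$. For the backward half, I use that the nonlinear equation \eqref{prob-diss12} (with $h\equiv0$) is reversible under $v(t):=\overline{u(T-t)}$, since $\Lambda^{\sigma}_{g}$ is self-adjoint and commutes with complex conjugation and $P$ has real coefficients; applying Theorem~\ref{teo-stab} to $\overline{v_{0}}$ and reversing time produces a control supported in $\omega$ driving some small state $v_{0}^{*}\in H^{\sigma/2}(M)$ to $v_{0}$ over an interval of length $T_{\mathrm{stab}}$. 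Concatenating the three controls (stabilization on $[0,T_{\mathrm{stab}}]$, local control on $[T_{\mathrm{stab}},T_{\mathrm{stab}}+T_{\mathrm{loc}}]$, reversed stabilization on $[T_{\mathrm{stab}}+T_{\mathrm{loc}},2T_{\mathrm{stab}}+T_{\mathrm{loc}}]$) gives the desired $h$ with $T:=2T_{\mathrm{stab}}+T_{\mathrm{loc}}$.

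For the higher-regularity statement ($u_{0},v_{0}\in H^{s}$, $s>\sigma/2$, small), I would revisit the fixed-point argument for the local controllability step: the HUM operator for the linear problem maps $H^{s}\times H^{s}$ to $C([0,T_{\mathrm{loc}}];H^{s})$ by a propagation-of-regularity argument for $i\partial_{t}+\Lambda^{\sigma}_{g}$, and the nonlinear iteration closes in $H^{s}$ because $H^{s}(M)$ is an algebra for $s>d/2$ (ensured by $d<\sigma+1$ when $s$ is slightly above $\sigma/2$), so the polynomial nonlinearity acts tamely. For the stabilization stages the same propagation of regularity, combined with smallness, propagates $H^{s}$ regularity on $[0,T]$.

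The main obstacle I expect is the observability inequality at the $H^{\sigma/2}$ level for the linear fractional equation: the principal symbol of $\Lambda^{\sigma}_{g}$ is $|\xi|_{g}^{\sigma}$, whose Hamiltonian flow is a time reparametrization of the geodesic flow, so $(\mathcal{A})$ should yield microlocal propagation into $\omega$; but turning this into an $H^{\sigma/2}$ observability estimate requires a careful compactness/uniqueness argument built on $(\mathcal{B})$, together with the Strichartz-based nonlinear well-posedness, and it is this step that drives the whole scheme.
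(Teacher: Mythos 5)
Your proposal is correct and follows essentially the same route as the paper: local exact controllability near the origin obtained by HUM for the linear fractional equation (observability proved by a compactness--uniqueness argument using propagation of microlocal defect measures under $(\mathcal{A})$) followed by a fixed-point perturbation to absorb the nonlinearity, combined with Theorem \ref{teo-stab} and the time-reversal $v(t)=\overline{u(T-t)}$ to pass from the small-data statement to arbitrary $R_{0}$. The only minor discrepancy is that for the \emph{linear} observability the paper does not invoke $(\mathcal{B})$: the unique continuation needed to remove the compact remainder there is derived from unique continuation for second order elliptic operators applied to $\Delta_{g}u_{0}+z_{0}u_{0}=0$ on $\omega$, assumption $(\mathcal{B})$ being used only in the nonlinear stabilization step.
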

\begin{rem}
Note that assumption $(\mathcal{B})$ is not necessary to get the controllability result for small  data. However, our argument to prove  Theorem \ref{controle} combine Theorem \ref{teo-stab} with a fixed point argument, which justifies the necessity of assumption $(\mathcal{B})$ in Theorem \ref{controle}.
\end{rem}

\subsection{Heuristic and structure of the paper}
In this article, our goal is to give an answer for the two control problems mentioned at the beginning of this introduction. We adopt the approach in Dehamn-Gerard-Lebeau’s paper \cite{dehman-gerard-lebeau} to prove global control results for the fractional Schrödinger equation on compact manifolds without boundary. 
 Precisely, when we consider $\sigma=2$, we recover the result proved by Dehman \textit{et al.} \cite{dehman-gerard-lebeau} for Schr\"odinger equation. On the other hand, when we consider $\sigma=4$ and $d=1$, we can use the torus $\mathbb{T}$ instead of the manifold $M$ to get the result  proved by Capistrano-Filho and Cavalcante in \cite{CaCa}. 
 Thus, this work give us more general results towards the dimension of the manifolds and more general answers for the control problem for Schr\"odinger-like equations. Let us describe briefly the main arguments of the proof of the theorems presented in the previous subsection. 
 
 In the first result of the manuscript, we will show that assuming assumptions $(\mathcal{A})$ and  $(\mathcal{B})$ the system \eqref{prob-diss1} is asymptotically stable, that is, Theorem \ref{teo-stab} holds. To do this, we generalize the ideas introduced in \cite{dehman-gerard-lebeau} for the fractional Schr\"odinger operator, in other words, we apply microlocal analysis in a general context to achieve the result.

Remark that the local control result (Theorem \ref{main1}) is a consequence of the assumption $(\mathcal{A})$, it implies that the linear system associated to \eqref{prob-diss12} is controllable. In fact, the main novelty of the manuscript is that assumption ($\mathcal{A}$), the so-called \textit{Geometric Control Condition}, ensures such controllability conditions for the generalized nonlinear fractional Schr\"odinger equation in a compact Riemannian manifold.  Additionally, note that relationship between assumption $(\mathcal{A})$ and the controllability and stabilization problems is not an immediate consequence of \cite{dehman-gerard-lebeau} for the fractional Schr\"odinger model (for details see Section \ref{sec6}).  So, roughly speaking, with the linear control result in hand, using fixed point argument for initial and final data small enough, we can prove that assumption $(\mathcal{A})$ implies the local controllability for the nonlinear system \eqref{Ncontrolsys-int}. Here, it is important to point out that we do not use any result of unique continuation property (assumption $(\mathcal{B})$) to prove the local controllability results.

It is important to point out that in \cite{dehman-gerard-lebeau} the results are based on properties of the Laplace-Beltrami operator. Such an operator is well known in the literature and can be explicitly \linebreak characterized as well as  its principal symbol. However, in our case, we deal with the fractional Laplace-Beltrami operator. Note that, in order to study the behavior of this operator, two points are necessary, namely, the behavior of its eigenvalues (see Section \ref{sec2}) and most importantly, its principal symbol what is detailed in Remark \ref{remsym} below, which is one of the key point to prove the propagation of singularities results (Propositions \ref{reg-propagacao-prop1} and \ref{propagacaocompacidade}).

To finish, about the control result for large data (Theorem \ref{controle}), the proof is a combination of a global stabilization result (Theorem \ref{teo-stab}) and the local control result (Theorem \ref{main1}), as is usual in control theory, see e.g. \cite{Dehman,dehman-gerard-lebeau,Laurent-siam,Laurent-esaim}.  

\medskip

To end the introduction, we present the outline of our manuscript:

\medskip

-- Section \ref{sec2} is to establish preliminary results which were used throughout the paper, precisely, first, we collect the result of pseudo-differential calculus on manifolds. Additionally, we give the estimates needed in our analysis, namely, \textit{Strichartz estimates}. Finally, we prove the existence of a solution for the nonlinear fractional Schr\"odinger equation \eqref{prob-diss1} with source and damping terms.

\medskip

-- Next, Section \ref{sec3}, \textit{propagation of singularities} and \textit{unique continuation property} are proved and, with this in hand, Sections \ref{sec4} and \ref{sec5} are aimed to present the proof of the stabilization and controllability theorems, respectively. 

\medskip

-- Finally, we present in the Section \ref{sec6} concluding remarks and open problems.

\section{Preliminaries}\label{sec2}
In this section let us remember some results about pseudo-differential calculus on manifolds, more precisely, we are particularly interested  in giving more information about the following operators
$$\Lambda^{\sigma}_{g} = (\sqrt{-\Delta_{g}})^{\sigma} \text{ and } (1 - \Delta_{g})^{\frac{s}{2}},$$ for $\sigma\in[2,\infty)$ and $s \in \mathbb{R}$. In addition, we will give the well-posedness result for the system in consideration in this work.

It is important to note that, given a compact Riemannian manifold $M$ without boundary with metric $g$,  we shall denote by  $TM$ its tangent 
bundle, and by $T^{*}M$ its cotangent bundle. Then the co-sphere bundle is defined as:
$$
S^{*} M=\left\{(x, \eta) \in T^{*} M,|\eta|_{x}^{2}=1\right\}
$$
where $|\eta|_{x} = \sqrt{g_{x}(\eta,\eta)}$. 

\subsection{Pseudo-differential operator}
Let $P: C^{\infty}(M) \to C^{\infty}(M)$ be  a classical pseudo-differential operator of order 
$n \in \mathbb{N}$ with principal symbol $p := p(x, \xi) \in C^{\infty}(T^{*}M\backslash \{ 0 \})$. 
\begin{defin}
We say that $P$ is an elliptic pseudo-differential operator if $p$ does not vanish in 
$T^{*}M\backslash \{0 \}$.
\end{defin}

 Assume that  $n > 0$ and $P$ is an elliptic and self-adjoint pseudo-differential operator of order $n$. Also, let us assume that the principal symbol of $P$ is identically positive on $T^{*}M\backslash \{0\}$. As a consequence of the  spectral Theorem, there is an orthonormal basis $\{e_{j}\}$ to $L^{2}(M)$ of eigenvectors of $P$ associated to the eigenvalues $(\lambda_{j})$, such that 
\begin{eqnarray*}
P = \sum_{j=1}^{\infty}{\lambda_{j}E_{j}},
\end{eqnarray*}
where $E_{j}f = (f, e_{j})_{L^{2}(M)}e_{j}$, for $f \in L^{2}(M)$ and $i \in \mathbb{N}$. 

\begin{defin}
We say that $m \in C^{\infty}(\mathbb{R})$ is a symbol of order $\mu \in \mathbb{R}$ if  we have
\begin{eqnarray}\label{m-simb}
\left|\frac{d^{\alpha}}{d\lambda^{\alpha}}m(\lambda) \right| \leq C_{\alpha}(1 + |\lambda|)^{\mu - \alpha},
\end{eqnarray}
for all $\alpha \in \mathbb{N}$.
\end{defin}

Consider the operator $m(P): C^{\infty}(M) \to C^{\infty}(M)$ defined by
\begin{eqnarray}\label{def-mP}
m(P)u = \sum_{j=1}^{\infty}{m(\lambda_{j})E_{j}u}.
\end{eqnarray}
The following result holds true for $P$ and $m(P)$.

\begin{teo}\label{teo-apendice}
Let $P: C^{\infty}(M) \to C^{\infty}(M)$ an 
elliptic and self-adjoint pseudo-differential operator of order $n \in \mathbb{N}$, as above. Therefore, we have:
\begin{itemize}
   \item[$(i)$] The pseudo-differential operator $P^{\frac{1}{n}}$, defined by the spectral theorem, is a classical pseudo-differential operator with order $1$ and its  principal symbol is  given by $(p(x, \xi))^{\frac{1}{n}}$.
  
   \item[$(ii)$] If $P$ has order $1$, then operator $m(P): C^{\infty}(M) \to C^{\infty}(M)$, defined in \eqref{def-mP}, is a pseudo-differential operator of order $\mu$ with principal symbol $m(p(x, \xi))$.
\end{itemize}
\end{teo}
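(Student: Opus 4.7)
Both statements are classical consequences of Seeley's theorem on complex powers of elliptic operators, together with the Helffer--Sjöstrand functional calculus. The plan is to reduce everything to the construction of an asymptotic parametrix for the resolvent $(P-\lambda)^{-1}$ and then express $P^{1/n}$ and $m(P)$ as Cauchy-type contour integrals of this resolvent, extracting the principal symbol from the leading term.

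For item $(i)$, first I would note that since $P$ is elliptic, self-adjoint, and its principal symbol $p(x,\xi)$ is positive on $T^\ast M\setminus\{0\}$, the spectrum $\{\lambda_j\}$ is discrete, real and bounded below, so there exists a contour $\Gamma\subset\mathbb{C}$ encircling $\sigma(P)$ counter-clockwise, avoiding $0$, along which we can take a single-valued branch of $\lambda\mapsto\lambda^{1/n}$ with $\arg\lambda\in(-\pi,\pi)$. Then one defines
\begin{equation*}
P^{1/n}=\frac{1}{2\pi i}\int_{\Gamma}\lambda^{1/n}(P-\lambda)^{-1}\,d\lambda,
\end{equation*}
which agrees with the spectral definition. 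The key ingredient is the construction of a parametrix $Q(\lambda)$ for $P-\lambda$ with symbol
\begin{equation*}
q(x,\xi,\lambda)\sim\sum_{k\ge 0} q_{-n-kn}(x,\xi,\lambda),\qquad q_{-n}(x,\xi,\lambda)=\bigl(p(x,\xi)-\lambda\bigr)^{-1},
\end{equation*}
where each $q_{-n-kn}$ is a rational function of $\lambda$ of the required homogeneity, obtained recursively from the symbolic expansion of $(P-\lambda)\#q\sim 1$. Plugging this parametrix into the Cauchy integral and performing the $\lambda$-integration term by term (using residues for the rational pieces) produces a classical symbol expansion for $P^{1/n}$; in particular the leading term gives
\begin{equation*}
\frac{1}{2\pi i}\int_{\Gamma}\lambda^{1/n}\bigl(p(x,\xi)-\lambda\bigr)^{-1}\,d\lambda=p(x,\xi)^{1/n},
\end{equation*}
which is the asserted principal symbol of order $1$. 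I would then control the error term coming from the smoothing part of the parametrix using standard estimates on the resolvent for $\lambda$ away from $\sigma(P)$, which ensure convergence of the integral in the operator topology.

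For item $(ii)$, assuming now $P$ of order $1$, I would invoke the Helffer--Sjöstrand formula: given a symbol $m$ of order $\mu$ satisfying \eqref{m-simb}, construct an almost-analytic extension $\widetilde m\in C^\infty(\mathbb{C})$ with $\widetilde m|_{\mathbb{R}}=m$ and $|\bar\partial\widetilde m(z)|\le C_N|\mathrm{Im}\,z|^N\langle z\rangle^{\mu-1-N}$ for every $N$, and write
\begin{equation*}
m(P)=-\frac{1}{\pi}\int_{\mathbb{C}}\bar\partial\widetilde m(z)\,(z-P)^{-1}\,dL(z).
\end{equation*}
Once more, replacing $(z-P)^{-1}$ by its parametrix with symbol $(p-z)^{-1}+\text{lower order}$ and performing the integration against $\bar\partial\widetilde m$ gives a symbol for $m(P)$ whose leading term evaluates, by Cauchy's formula applied to the almost-analytic extension, exactly to $m(p(x,\xi))$, with the remainder being of order $\mu-1$. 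The membership of each asymptotic term in the symbol class $S^{\mu-k}$ follows from the decay estimates on $\bar\partial\widetilde m$ together with the rational structure of the $z$-dependence of the parametrix.

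The main technical obstacle in both parts is the careful bookkeeping of the resolvent parametrix: one must show that the formal symbolic expansion converges in the appropriate symbol class uniformly in the spectral parameter, and that the smoothing remainder does not spoil the contour/Helffer--Sjöstrand integration. For $m(P)$ this requires the almost-analytic extension to be tailored so that $\bar\partial\widetilde m$ decays fast enough in $|\mathrm{Im}\,z|$ to compensate the at most polynomial blow-up of the resolvent at the real axis. With these estimates in place, both $P^{1/n}$ and $m(P)$ admit classical asymptotic symbol expansions with the claimed principal symbols, completing the proof.
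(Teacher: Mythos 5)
The paper does not actually prove this theorem: it is quoted from Sogge \cite{Sogge}, whose argument (for part $(ii)$ in particular) is the wave-group method, writing $m(P)=(2\pi)^{-1}\int\hat m(t)e^{itP}\,dt$ and building a Fourier integral operator parametrix for the half-wave group $e^{itP}$ for small $t$; the stationary phase expansion of that parametrix is what produces the symbol $m(p(x,\xi))$. Your route — Seeley's resolvent parametrix plus a Cauchy/Helffer--Sj\"ostrand integral — is a genuinely different and equally standard way to reach the same conclusions; it avoids Fourier integral operators entirely and stays within the pseudo-differential calculus, at the price of more delicate bookkeeping in the spectral parameter. Both are acceptable, and your identification of the leading term $\frac{1}{2\pi i}\int_\Gamma\lambda^{1/n}(p-\lambda)^{-1}d\lambda=p^{1/n}$ is the right heart of the matter.

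That said, two steps in your sketch would fail as literally written. First, the spectrum of $P$ is an unbounded discrete set, so there is no closed contour $\Gamma$ ``encircling $\sigma(P)$''; one must use an unbounded keyhole contour around the positive real axis, and then the integral $\frac{1}{2\pi i}\int_\Gamma\lambda^{1/n}(P-\lambda)^{-1}\,d\lambda$ diverges, since $\lambda^{1/n}(P-\lambda)^{-1}=O(|\lambda|^{1/n-1})$ along $\Gamma$ and $1/n-1>-1$. Seeley's construction defines $P^{z}$ by this integral only for $\operatorname{Re}z<0$ and then obtains $P^{1/n}$ as $P\cdot P^{1/n-1}$ (or by integrating by parts in $\lambda$); you need this reduction, and you also need to deal with a possible zero eigenvalue (as for $-\Delta_g$, whose kernel contains the constants) by modifying $P$ on a finite-dimensional space by a smoothing operator. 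Second, the same issue recurs in part $(ii)$: the Helffer--Sj\"ostrand formula as you state it requires the integral over $\mathbb{C}$ to converge, which it does not for a symbol $m$ of positive order $\mu$ without first factoring $m(\lambda)=\langle\lambda\rangle^{2N}m_N(\lambda)$ with $m_N$ of negative order and writing $m(P)=(1+P^2)^{N}m_N(P)$. These are repairable gaps, not wrong ideas, but the proof is incomplete until they are addressed.
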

\begin{proof}
The proof can be found in \cite[Theorems 3.3.1 and 4.3.1]{Sogge} and, therefore, we will omit it.
\end{proof}

Let us now apply Theorem \ref{teo-apendice} for the negative Laplace-Beltrami operator, namely $P := -\Delta_{g}$ with principal symbol $p(x, \xi) = |\xi|_{x}^{2}$. Thus, as a consequence of item $(i)$ in Theorem \ref{teo-apendice}, $P_{1} = \sqrt{-\Delta_{g}}$ is a pseudo-differential operator of order $1$ with principal symbol 
$p_{1}(x, \xi) = |\xi|_{x}$ and $P_{\sigma} = \Lambda^{\sigma}_{g} = \left( \sqrt{-\Delta_{g}} \right)^{\sigma}$, for $\sigma\in[2,\infty)$, is a pseudo-differential operator of order $\sigma$ with principal symbol $p_{\sigma}(x, \xi) = |\xi|_{x}^{\sigma}$. Note that in the same way we can define the operators $(1 - \Delta_{g})^{\frac{s}{2}}$ as a pseudo-differential operator of order $s \in \mathbb{R}$.

\begin{rem}\label{remsym}
Let $(M, g)$ be a compact Riemannian manifold and $P_{1} = \sqrt{-\Delta_{g}}$ defined as above. Given $\omega_{0} \in T^{*}M\backslash \{0\}$, $\omega_{1} \in \Gamma_{\omega_{0}}$ and  $\Gamma_{\omega_{0}}$ geodesics starting at $\omega_{0}$ with speed $1$, consider $U$ and $V$ conic neighborhoods of $\omega_{0}$ and $\omega_{1}$, respectively. Thus, if  $c = c(x, \xi)$ is a symbol of order $s-1$, with $s \in \mathbb{R}$, supported in $U$, there exist $b, r$ symbols of order $s-1$ such that 
\begin{equation}
\frac{1}{i}\{ |\xi|_{x}, b\} = c + r,
\end{equation}
with $r$ supported in the conic neighborhood $V$ of $\omega_{1}$. From this, we can conclude that for any symbol $b_{1}$, since
\begin{equation}
\{ |\xi|^{\sigma}_{x}, b_{1}\} = \sigma|\xi|_{x}^{\sigma-1}\{|\xi|_{x}, b_{1}\},
\end{equation} 
for a given symbol $c = c(x, \xi)$ of order $s_{1} \in \mathbb{R}$ supported in $U$, there exists $b$, a symbol of order 
$s_{1} - \sigma + 1$, and $r$, a symbol of order $s_{1}$ and supported in $V$, such that
\begin{equation}
\frac{1}{i}\{|\xi|^{\sigma}_{x}, b_{1}\} = c + r.
\end{equation}
\end{rem}

\subsection{Cauchy problem for the fractional Schr\"odinger equation} 
 
In this section, the Strichartz estimates for the linear equation are revisited, these estimates play a crucial role in the whole work. Consider $\sigma \in [2, \infty)$ and $(M, g)$ a smooth compact Riemannian manifold without boundary. 
Here, as mentioned in the introduction, $\Delta_{g}$ is  the 
Laplace-Beltrami operator  associated to the metric $g$ and $\Lambda_{g}^{\sigma} = \left(\sqrt{-\Delta_{g}}\right)^{\sigma}$. The following result is borrowed from \cite{Burq-Gerard-Tz} and \cite{Dinh}.

\begin{proposition}[Strichartz estimates]\label{St-Ref}
In the conditions above, let $I \subset \mathbb{R}$ be 
a bounded interval and $(p,q)$ such that 
\begin{eqnarray}\label{St-Admssi}
p \in [2, \infty], \quad q \in [2, \infty) \quad (p, q, d) \neq (2, \infty, 2), \quad \frac{2}{p} + \frac{d}{q} = \frac{d}{2}
\end{eqnarray} 
then, there exists $C > 0$ such that 
\begin{eqnarray}\label{ref}
\|e^{it \Lambda_{g}^{\sigma}}u_{0}\|_{L^{p}(I; L^{q}(M))} \leq C\|u_{0}\|_{H^{\frac{1}{p}}(M)}.
\end{eqnarray}
Moreover, if $u=u(x,t)$ is a weak solution of 
\begin{equation*}
\begin{cases}
 i \partial_{t}u + \Lambda_{g}^{\sigma}u = F, &\text{ on } M\times I,\\
 u(x,0) = u_{0}(x), &x\in M,
\end{cases}
\end{equation*}
then,
\begin{eqnarray} \label{St-Est}
\|u\|_{L^{p}(I; L^{q}(M))} \leq C\left(\|u_{0}\|_{H^{\frac{1}{p}}(M)} + \|F\|_{L^{1}\left(I; H^{\frac{1}{p}}(M)\right)} \right).
\end{eqnarray}
\end{proposition}
\begin{rem}
The previous proposition is borrowed from  \cite{Dinh}, where it is stated with the norm in the right hand side of (\ref{ref}) is calculated in $H^{\gamma_{pq} - \frac{\sigma-1}{p}}(M^{d})$, with 
$$ \gamma_{p q}=\frac{d}{2}-\frac{d}{q}-\frac{\sigma}{p}.$$
In our case, since we have chosen $\frac{2}{p}+\frac{d}{q}=\frac{d}{2}$, this leads us to
\begin{eqnarray*}
\gamma_{pq} - \frac{\sigma-1}{p} &=& \frac{d}{2} - \frac{d}{q} - \frac{\sigma}{p} + \frac{\sigma}{p} - \frac{1}{p}\\
&=& \frac{d}{2} - \frac{d}{q} - \frac{1}{p} \\
&=& \frac{2}{p} - \frac{1}{p} = \frac{1}{p}.
\end{eqnarray*}
Therefore, Proposition \ref{St-Ref} can be seen as a particular case of \cite[Theorem 1.2]{Dinh}.
\end{rem}

With these Strichartz estimates in hand, we are able to infer that the Cauchy problem
\begin{equation} \label{prob-nonh}
\begin{cases}
i \partial_{t} u  + \Lambda^{\sigma}_{g}u  + P'(|u|^{2})u = h,&   \text{ on } M\times \mathbb{R}_{+}, \\
u(x,0)=u_{0}(x),& x\in M,
\end{cases}
\end{equation}
is globally well posed on compact Riemannian manifolds without boundary. More precisely, the result can be read as follows.
\begin{teo}\label{Teo-Prob-nh}
Let $\sigma \in [2, \infty)$ and $(M, g)$ be a compact smooth manifold of dimension $d$.
Then given $P$ a polynomial function with degree $d^{\circ}P\geq 1$, satisfying conditions $\eqref{p1}$ and $\eqref{p2}$, such that
\begin{eqnarray*}
s > \frac{d}{2} - \frac{1}{\max\{2d^{\circ}P -1, 2  \}}, \quad s\in\mathbb{R},
\end{eqnarray*}
for every $u_{0} \in H^{s}(M)$ and $h \in L^{1}_{loc}(\mathbb{R}; H^{s}(M))$ there exists $T > 0$ and a unique solution $u=u(x,t)$ of  \eqref{prob-nonh} with the following regularity $$u \in C([0, T]; H^{s}(M)).$$
 Moreover, if $d < [\sigma] + 1$ and  $s \geq \frac{\sigma}{2}$ the solution is global in time and $$u \in L^{p}(0, T; L^{\infty}(M)),$$ for every $p < \infty$ and for all $T < \infty$.
\end{teo}

\begin{proof}
	Let $T > 0$ and pick $p > \max\{\beta - 1, 2 \}$, where $\beta = 2d^{\circ}P$,
such that $s > \frac{d}{2} - \frac{1}{p}$ and consider 
\begin{eqnarray}\label{def-Yt}
Y_{T} = C([0, T]; H^{s}(M)) \cap L^{p}([0, T]; W^{\alpha, q}(M)),
\end{eqnarray}
where $q$ is given by $\frac{2}{p} + \frac{d}{q} = \frac{d}{2}$ and $\alpha = s - \frac{1}{p} > \frac{d}{q}$. Consider in $Y_T$ the following norm
\begin{eqnarray*}
\|u\|_{Y_{T}} = \max_{0 \leq t \leq T}\|u(t)\|_{H^{s}(M)} + \|(1 - \Delta_{g})^{\frac{\alpha}{2}}u\|_{L^{p}([0, T]; L^{q}(M))}.
\end{eqnarray*}

Observe that $Y_{T} \subset L^{p}([0, T]; L^{\infty}(M))$ by Sobolev embeddings. Due to the Duhamel formula, we have to prove that the operator 
\begin{eqnarray*}
\Phi(u)(t) = e^{it \Lambda_{g}^{\sigma}}u_{0} - i\int_{0}^{t}{e^{i(t-\tau)\Lambda_{g}^{\sigma}}[h - P'(|u|^{2})u](\tau)}d\tau,
\end{eqnarray*}
has a fixed point. In fact,  by using Strichartz estimates given in \eqref{St-Est}, we get that
\begin{eqnarray}\nonumber
\|\Phi(u)\|_{Y_{T}} & \leq & C\left(\|u_{0}\|_{H^{s}(M)} + \int_{0}^{T}{\|h(\tau) - P'(|u|^{2})u(\tau)\|_{H^{s}(M)}} d\tau\right)\\   \label{contd-1}
& \leq & C\left(\|u_{0}\|_{H^{s}(M)} + \|h\|_{L^{1}([0, T]; H^{s}(M))} + \int_{0}^{T}{(1 + \|u\|_{L^{\infty}(M)}^{\beta-1})\|u(\tau)\|_{H^{s}(M)}}d\tau \right) \\
& \leq &C\left(\|u_{0}\|_{H^{s}(M)} + \|h\|_{L^{1}([0, T]; H^{s}(M))} + T^{\gamma}(1 + \|u\|_{Y_{T}}^{\beta-1})\|u\|_{L^{\infty}([0, T]; H^{s}(M))}\right),\nonumber
\end{eqnarray}
where $\gamma = 1 - \frac{\beta-1}{p}$. Similarly, for every $u, v \in Y_{T}$,
\begin{eqnarray}\label{cont-2-ex}
\|\Phi(u) - \Phi(v)\|_{Y_{T}} \leq CT^{\gamma}(1 + \|u\|_{Y_{T}}^{\beta - 1} + \|v\|_{Y_{T}}^{\beta - 1})\|u - v\|_{Y_{T}}.
\end{eqnarray}
To conclude, consider $\Phi$ defined in a closed ball $$B_{R} = \{u \in Y_{T}; \|u\|_{Y_{T}} \leq R \},$$ with $R, T > 0$ small enough. Thanks to \eqref{contd-1} and \eqref{cont-2-ex}, $\Phi$ is a contraction and, thus, has a unique fixed point, i.e., \eqref{prob-nonh} has a local solution defined in a maximal interval $[0, T]$.  Arguing as in \eqref{cont-2-ex}, if $r \geq \frac{\sigma}{2}$, $u_{0} \in H^{r}(M)$ and $h \in L^{1}_{loc}(0, T; H^{r}(M))$, we get that
\begin{eqnarray*}
\|\Phi(u) - \Phi(0)\|_{L^{\infty}(0, T; H^{r}(M))} \leq CT^{\gamma}\left(1 + \|u\|_{Y_{T}} \right) \|u\|_{L^{\infty}(0, T; H^{r}(M))},
\end{eqnarray*}
which ensures that \eqref{prob-nonh} has a local solution $u \in L^{\infty}(0, T; H^{r}(M))$.

Next, let us prove that if $s \geq \frac{\sigma}{2}$, $u=u(x,t)$ solution of \eqref{prob-nonh} is global in time. To see this, we consider the energy functional $E(t)$ defined by  \eqref{Func-En}. Thus,  for every $T > 0$, observe that
 \begin{eqnarray*}
 E(T) \leq E(0) + \int_{0}^{T}{\int_{M}{|\Lambda_{g}^{\frac{\sigma}{2}}u| |\Lambda_{g}^{\frac{\sigma}{2}}h|}dx}dt + \int_{0}^{T}{\int_{M}{|P'(|u|^{2})u| |h|}dx}dt.
 \end{eqnarray*}
Let us to bound the last integral. Denoting $\alpha_{1} = \frac{\beta'}{\beta'-1}$, where $\beta' = 2d^{\circ}P$ and $\frac{1}{\alpha_{1}} + \frac{1}{\mu} = 1$, we estimate the last integral as follows
\begin{eqnarray*}
\int_{0}^{T}{\int_{M}{|P'(|u|^{2})u| |h|}dx}dt & \leq & \int_{0}^{T}{\left[\int_{M}{|P'(|u|^{2})u(t)|^{\alpha_{1}}}dx \right]^{\frac{1}{\alpha_{1}}} \|h(t) \|_{L^{\mu}(M)}}dt \\
& \leq & C \int_{0}^{T}{\left[1 + \int_{M}{|u(t)|^{\beta'}}dx \right] \|h(t)\|_{H^{\frac{\sigma}{2}}(M)}}dt \\
& \leq & C\int_{0}^{T}{\left[ 1+ \int_{M}{P(|u|^{2})}dx \right]\|h(t) \|_{H^{\frac{\sigma}{2}}(M)}}dt.
\end{eqnarray*}
Thus, we get the following
\begin{eqnarray*}
E(T) \leq E(0) + C\int_{0}^{T}{(1 + E(t))\|h(t) \|_{H^{\frac{\sigma}{2}}(M)}}dt.
\end{eqnarray*}
An application of Gronwall inequality give us that $E(t)$ is uniformly bounded, hence $u=u(x,t)$, solution of \eqref{prob-nonh},  is global in time.  Moreover, since $u \in Y_{T}$, for  every $T > 0$, we have that $u \in \bigcap_{p< \infty} L^{p}_{loc}(\mathbb{R}; L^{\infty}(M))$. 

To finish our proof, it remains to show that the solution is unique in $C([0, T]; H^{s}(M))$, for $s \geq \frac{\sigma}{2}$. Consider  $u$ and $v$ solutions of \eqref{prob-nonh}. Therefore, we have that $u-v$ satisfies 
\begin{equation}\label{eq-uni-1}
\begin{cases}
i \partial_{t}(u - v)  + \Lambda_{g}^{\sigma}(u-v)= P'(|v|^{2})v - P'(|u|^{2})u,&(x,t)\in  M\times ]0, T[,\\
(u - v)(0)= 0,&x\in M.
\end{cases}
\end{equation}
By the hypothesis over $d$, that is, $d < [\sigma] + 1$, we have that $H^{\frac{\sigma}{2}}(M) \hookrightarrow L^{q}(M)$ for every $q \in [2, \infty)$, if $d = \sigma$ and   $H^{\frac{\sigma}{2}}(M) \hookrightarrow L^{\infty}(M)$, if $d < [\sigma]$. From \eqref{eq-uni-1} follows that
\begin{eqnarray*}
\|u-v\|_{L^{\infty}(0, T; L^{2}(M))} & \leq & C\int_{0}^{T}{\int_{M}{\left|P'(|u|^{2})u - P'(|v|^{2})v\right| \left|u - v \right|}}dt \\
& \leq & C(u,v)T\|u - v\|_{L^{\infty}(0, T; L^{2}(M))},
\end{eqnarray*}
where $C(u, v) > 0$ denotes a constant depending on $u$ and $v$. Hence, for $T' > 0$ small enough, we conclude that $u = v$ on $]0, T'[ \times M$, iterating this result we get that $u = v$ on $]0, T[ \times M$. 
\end{proof}

\begin{remarks}\label{Remarks}
The following remarks are now in order.
\begin{itemize}
\item[i.] Observe that, by using estimate \eqref{contd-1} with $T < 1$ small, if the constant $C$ is big enough yields that 
\begin{eqnarray*}
\|u\|_{Y_{T}} \leq C\left(\|u_{0}\|_{H^{s}(M)} + \|h\|_{L^{1}(0, T; H^{s}(M))} + \|u\|_{Y_{T}}^{\beta} \right).
\end{eqnarray*}  
Therefore, if $\|u_{0}\|_{H^{s}(M)} + \|h\|_{L^{1}(0, T; H^{s}(M))}$ is small enough, we can conclude, by a boot-strap argument, that 
\begin{eqnarray*}
\|u\|_{Y_{T}} \leq C(\|u_{0}\|_{H^{s}(M)} + \|h\|_{L^{1}(0, T; H^{s}(M))}).
\end{eqnarray*}
\item[ii.] From the proof of Theorem \ref{Teo-Prob-nh} we can conclude that the solution $u\in C(\mathbb{R}_+; H^{\frac{\sigma}{2}}(M))$ of \eqref{prob-nonh}  is such that $u \in L^{p}(0, T; L^{\infty}(M))$, for all $T > 0$. Hence, we have that $P'(|u|^{2})u \in L^{2}(0, T; H^{\frac{\sigma}{2}}(M))$, for all $T > 0$ and 
arguing as in \eqref{contd-1} we have, if $h = 0$,  the following estimate
\begin{eqnarray*}
\|P'(|u|^{2})u\|_{L^{2}(0, T; H^{\frac{\sigma}{2}}(M))} \leq C(T)\left(\|u\|_{Y_{T}}^{2} + \|u\|_{Y_{T}}^{2 \beta} \right),
\end{eqnarray*}
for all $T> 0$.
\end{itemize}
\end{remarks}





\subsection{Well-posedness for the full system}Finally, to finish this section, let us prove a result that ensure the existence of solutions  for the nonlinear fractional Schr\"odinger equation with damping term, that is, changing $h$ by $a(x)(1 - \Delta_{g})^{-\frac{\sigma}{2}}a(x)\partial_{t}u$ in the system \eqref{prob-nonh}, the result is the following one.

\begin{teo}\label{Prob-diss-Teo}
Let $\sigma \in [2, \infty)$ and $(M, g)$ be a compact Riemannian manifold of dimension $d < [\sigma] +1$. Then given $P'$ a polynomial function satisfying conditions \eqref{p1}- \eqref{p2},  
$u_{0} \in H^{\frac{\sigma}{2}}(M)$ and 
$a=a(x) \in C^{\infty}(M)$ a non-negative  real valued function,
then there exists an unique $u \in C(\mathbb{R}_{+}; H^{\frac{\sigma}{2}}(M))$ solution of the system
  \begin{equation} \label{prob-diss}
  \begin{cases}
i \partial_{t} u  + \Lambda^{\sigma}_{g}u  + P'(|u|^{2})u - a(x)(1 - \Delta_{g})^{-\frac{\sigma}{2}}a(x)\partial_{t}u = 0, &\text{ on } M\times \mathbb{R}_{+} ,\\
u(x,0)=u_0(x),& x\in M.
\end{cases}
\end{equation}

\end{teo}
\begin{proof} 
We claim that:

\vspace{0.2cm}
\textit{The operator $Jv = (1 - ia(x)(1 - \Delta_{g})^{-\frac{\sigma}{2}}a(x))v$ is a pseudo-differential operator of order $0$ which defines an isomorphism on $H^{s}(M)$, for $s \in \mathbb{R}$, and also on $L^{p}(M)$.}
\vspace{0.2cm}

Indeed, note that we can write $J$ as $J = I + J_{1}$, where $J_{1}$ is an anti-self-adjoint operator in $L^{2}(M)$ (we refer the reader to the Section \ref{sec2} for an introduction of such kind of operators), thus $J$ is an isomorphism in $L^{2}(M)$ and, due to the ellipticity, in $H^{s}(M)$, for $s>0$. Note that  $J$ is an isomorphism for every $s \in \mathbb{R}$, by duality. 

With this information in hand, system \eqref{prob-diss} can be written as follows
  \begin{equation*}
  \begin{cases}
\partial_{t}v - i\Lambda^{\sigma}_{g}v - R_{0}v -iP'(|u|^{2})u = 0, &\text{ on } M\times ] 0, T[ ,\\
v = Ju, &\text{ on } M\times ] 0, T[ ,\\
v(0) = v_{0} = Ju_{0} \in H^{\frac{\sigma}{2}}(M), & \text{ on } M,
\end{cases}
\end{equation*}
where $R_{0} = - i\Lambda_{g}^{\sigma} + i \Lambda_{g}^{\sigma}J^{-1}$ is a pseudo-differential operator of order $0$. Observe that, by the Duhamel formula, as in the previous proof, the functional
  \begin{eqnarray}
  \Phi(v)(t) = e^{it\Lambda_{g}^{\sigma}}v_{0} + \int_{0}^{t}{e^{i(t - \tau)\Lambda_{g}^{\sigma}}[R_{0}v + iP'(|u|^{2})u](\tau)}d\tau
  \end{eqnarray}
  has a fixed point, considering $\Phi$ defined in a suitable ball of the space $Y_{T}$ defined in  \eqref{def-Yt}, with $s = \frac{\sigma}{2}$, which provides the local existence. Considering the functional $E(t)$ defined in \eqref{Func-En} we have that 
$$
  E(t) \leq E(0), \forall t \in [0, T],
$$
  which guarantees that this solution is global in time. The uniqueness can be proved as in Theorem \ref{Teo-Prob-nh}, and the proof is complete.
\end{proof}

\section{Propagation of singularities and unique continuation property}\label{sec3}

\subsection{Propagation of regularity} We begin this section by proving a result of propagation of regularity for solutions to the linear fractional Schr\"odinger equation, which were used throughout the paper. The main ingredient is basic pseudo-differential analysis.
\begin{proposition}\label{reg-propagacao-prop1}
Let $u\in C([0, T]; H^{\frac{\sigma}{2}}(M))$ be a solution of 
$$
i\partial_{t}u + \Lambda_{g}^{\sigma}u = f,
$$
with $f \in L^{2}_{loc}(]0, T[; H^{\frac{\sigma}{2}}(M))$ and consider 
$\omega \subset M$ an open set satisfying assumption $(\mathcal{A})$.  If $u \in L^{2}(]0, T[; H^{\frac{\sigma}{2} + \rho}(\omega))$ for some 
$\rho \leq \frac{1}{2}$ 
then $$u \in L^{2}(]0, T[; H^{\frac{\sigma}{2} + \rho}(M)).$$ In particular, 
if $u \in C^{\infty}(]0, T[ \times \omega)$, then  $u \in C^{\infty}(]0, T[ \times M)$.
\end{proposition}

\begin{proof}
Observe that for every $\omega_{0} = (x_{0}, \xi_{0}) \in T^{*}M\backslash \{0\}$, with $x_{0} \in \omega$, 
we can consider $\phi(x, D_{x})$ a zero order pseudo-differential operator elliptic at $\omega_{0}$ 
and, such that, $$\phi(x, D_{x})u \in L^{2}_{loc}(0, T; H^{\frac{\sigma}{2} + \rho}(M)).$$
The result will be proved by using elliptic regularity and assumption $(\mathcal{A})$, showing that for every $\omega_{1} \in \Gamma_{\omega_{0}}$, $\Gamma_{\omega_{0}}$ a geodesics of $M$ starting at $\omega_{0}$ and traveling with speed $1$, one has that there exists $\psi(x, D_{x})$ a pseudo-differential operator of order $0$, elliptic at $\omega_{1}$, such that 
$\psi(x, D_{x})u \in L^{2}_{loc}(0, T; H^{\frac{\sigma}{2}+\rho}(M))$. 

First, consider $B(x, D_{x})$ a tangential pseudo-differential 
operator on $M$ of order $2s - (\sigma - 1)$, where 
$s = \frac{\sigma}{2}+\rho$ and $\varphi \in C_{0}^{\infty}(]0, T[)$. 
Then, $A(t, x, D_{x}) = \varphi B(x, D_{x})$ is a tangential pseudo-differential operator of order 
$2s - (\sigma -1)$. Now, regularize $u$ by introducing the sequence 
$$u_{n} = \left(1 - \frac{1}{n}\Delta_{g}\right)^{-2}u$$
and denote $L = i \partial_{t} + \Lambda_{g}^{\sigma}$, thus we have 
\begin{eqnarray*}
(Lu_{n}, Au_{n})_{L^{2}(0, T; L^{2}(M))} - (Au_{n}, Lu_{n})_{L^{2}(0, T; L^{2}(M))} =& ([A, \Lambda_{g}^{\sigma}]u_{n}, u_{n})_{L^{2}(0, T; L^{2}(M))}\\& - i(\varphi'Bu_{n}, u_{n})_{L^{2}(0, T; L^{2}(M))},
\end{eqnarray*}
which leads to
\begin{eqnarray}\label{prop-reg1}
\begin{split}
([A, \Lambda_{g}^{\sigma}]u_{n}, u_{n})_{L^{2}(0, T; L^{2}(M))} &&= (Lu_{n}, Au_{n})_{L^{2}(0, T; L^{2}(M))} - (Au_{n}, Lu_{n})_{L^{2}(0, T; L^{2}(M))} \\&&+ i(\varphi'Bu_{n}, u_{n})_{L^{2}(0, T; L^{2}(M))}.
\end{split}
\end{eqnarray}
We need to bound the terms that appears in \eqref{prop-reg1}, which will give us that 
\begin{eqnarray}
([A, \Lambda_{g}^{\sigma}]u_{n}, u_{n})_{L^{2}(0, T; L^{2}(M))}  = \int_{0}^{T}{\varphi(t)([B, \Lambda_{g}^{\sigma}]u_{n}, u_{n})_{L^{2}(M)}}dt
\end{eqnarray}
is uniformly bounded with respect to $n \in \mathbb{N}$. Indeed, to estimate \eqref{prop-reg1} we 
observe that 
\begin{eqnarray*}
\left|(Lu_{n}, Au_{n})\right| &\leq& \|Lu_{n}\|_{L^{1}\left(0,T; H^{-s+\frac{1}{2}}(M)\right)}\|Au_{n}\|_{L^{\infty}\left(0, T; H^{s-\frac{1}{2}}(M)\right)} \\
& \leq & C\|u_{n}\|_{L^{\infty}(0, T;H^{r}(M))}\|Lu_{n}\|_{L^{1}(0, T; H^{r}(M))},
\end{eqnarray*}
where $r = s - \sigma - \frac{1}{2}$ and  we have used that $$A: L^{\infty}(0, T; H^{s - \sigma - \frac{1}{2}}(M)) \to L^{\infty}(0, T; H^{-s+\frac{1}{2}}(M)),$$ is a continuous linear operator and 
$\rho \leq \sigma + \frac{1}{2}$. The other terms in $(\ref{prop-reg1})$ can be analogously estimated, then we will omit the details.

Now, take $\omega_{1} \in \Gamma_{\omega_{0}}$, 
$U$ and $V$ two small conic neighborhoods of $\omega_{1}$ and $\omega_{0}$, respectively. As observed in Remark \ref{remsym},  
for every symbol $c = c(x, \xi)$ of order $s$ and  supported 
in $U$ there exists a symbol $b(x, \xi)$ of order 
$2s - \sigma +1$ and a symbol $r(x, \xi)$ of order $2s$ supported in $V$ such that 
\begin{eqnarray*}
\frac{1}{i}\{|\xi|^{\sigma}_{x}, b \} = |c(x, \xi)|^{2} + r(x, \xi) .
\end{eqnarray*}
If we choose $c$ elliptic in $\omega_{1}$ then we conclude that 
\begin{eqnarray*}
\int_{0}^{T}{\|c(x, D_{x})u_{n}(t)\|_{L^{2}(M)}^{2}}dt \leq C.
\end{eqnarray*}
Lastly, defining $\psi(x, D_{x}) = (1 - \Delta_{g})^{- \frac{s}{2}}c(x, D_{x})$ we get the desired result.

If $u \in C^{\infty}((0, T) \times \omega)$ we can repeat the same reasoning to prove that $u \in L^{2}(0,T; H^{\frac{\sigma}{2}+ 2 \rho}(M))$ and then to prove that $u \in L^{2}(0,T; H^{\frac{\sigma}{2}+ 3 \rho}(M))$ and, iterating this process, conclude that $u \in C^{\infty}((0, T) \times M)$.
\end{proof}

\subsection{Unique continuation property} We come now to prove that the solutions of the fractional Schr\"odinger equation when start smooth in a sub-domain $\omega$ of $M$ keep smooth in $M$, the result is read as follows.

\begin{cor}\label{prop-reg-proposition2}
Assume that $\omega \subset M$ is an open set satisfying assumption $(\mathcal{A})$. Consider $u \in C(]0, T[; H^{\frac{\sigma}{2}}(M))$ solution of 
\begin{equation}\label{prop-reg-2}
\begin{cases}
i \partial_{t} u+\Lambda^{\sigma}_{g}u +P'(|u|^{2})u=0,&\text{ on } M\times ] 0, T[,
\\ u(x,0)=u_0(x),& x\in M,
\end{cases}
\end{equation}
such that $u \in L^{2}(0, T; H^{\frac{\sigma}{2} + \rho}(\omega))$, for some $\rho > 0$. Then, $u \in C(]0, T[; H^{\frac{\sigma}{2}+\rho}(M))$. In particular, if $u \in C^{\infty}(]0, T[ \times \omega)$, then $u \in C^{\infty}(]0, T[ \times M)$.
\end{cor}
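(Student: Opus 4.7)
The plan is to treat \eqref{prop-reg-2} as a linear inhomogeneous equation $i\partial_{t}u + \Lambda_{g}^{\sigma}u = f$ with $f := -P'(|u|^{2})u$, and to bootstrap using Proposition \ref{reg-propagacao-prop1} together with the Duhamel formula. First I would invoke Remark \ref{Remarks} (ii), which already gives $f \in L^{2}_{loc}(]0,T[; H^{\sigma/2}(M))$, so the hypotheses of Proposition \ref{reg-propagacao-prop1} are met. Setting $\rho_{1} := \min\{\rho,1/2\}$, this provides the global integrability gain
\begin{equation*}
u \in L^{2}_{loc}\bigl(]0,T[; H^{\sigma/2+\rho_{1}}(M)\bigr).
\end{equation*}

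Next I would promote this $L^{2}$-in-time regularity to $C$-in-time regularity. Pick any $t_{0}\in(0,T)$ for which $u(t_{0})\in H^{\sigma/2+\rho_{1}}(M)$; by the previous step, almost every $t_{0}$ works. For $t$ in a small neighborhood of $t_{0}$ one writes
\[
u(t) = e^{i(t-t_{0})\Lambda_{g}^{\sigma}}u(t_{0}) + i\int_{t_{0}}^{t} e^{i(t-s)\Lambda_{g}^{\sigma}} P'(|u(s)|^{2})u(s)\,ds.
\]
The free evolution is continuous into $H^{\sigma/2+\rho_{1}}(M)$, and to control the Duhamel integral I would use a Moser-type product estimate of the shape
\[
\|P'(|u|^{2})u(s)\|_{H^{\sigma/2+\rho_{1}}(M)} \lesssim \bigl(1+\|u(s)\|_{L^{\infty}(M)}^{N}\bigr)\|u(s)\|_{H^{\sigma/2+\rho_{1}}(M)}
\]
for some integer $N$ depending on $\deg P$. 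Combined with $u\in L^{p}(0,T;L^{\infty}(M))$ for every $p<\infty$, coming from Theorem \ref{Teo-Prob-nh}, and H\"older's inequality in time, this yields $P'(|u|^{2})u \in L^{1}_{loc}(]0,T[; H^{\sigma/2+\rho_{1}}(M))$, hence $u \in C(]0,T[; H^{\sigma/2+\rho_{1}}(M))$ after a covering of $(0,T)$ by such neighborhoods.

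To handle the full range $\rho>1/2$, and in particular the $C^{\infty}$ statement, I would iterate. The commutator proof of Proposition \ref{reg-propagacao-prop1} is purely symbolic and extends verbatim with any baseline $s\geq \sigma/2$ in place of $\sigma/2$: under the analogous hypotheses one gets $u\in L^{2}_{loc}(]0,T[; H^{s+\tau}(M))$ for $\tau\leq 1/2$. Feeding each newly acquired gain back into the Duhamel argument of the previous paragraph lifts the global regularity by up to $1/2$ derivative per pass; finitely many iterations cover any prescribed $\rho>0$, and countably many iterations upgrade $u\in C^{\infty}(]0,T[\times\omega)$ to $u\in C^{\infty}(]0,T[\times M)$. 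The main obstacle I anticipate is verifying, at each level of the bootstrap, that the product estimate stays compatible with the Strichartz output $u\in L^{p}(L^{\infty})$ so that the nonlinearity remains in $L^{1}_{t}H^{s}_{x}$; this is standard on compact manifolds once the Sobolev embeddings $H^{\sigma/2}(M)\hookrightarrow L^{q}(M)$ available under $d<\sigma+1$ (noted in the uniqueness part of Theorem \ref{Teo-Prob-nh}) are exploited.
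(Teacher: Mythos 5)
Your proposal is correct and follows essentially the same route as the paper: obtain $P'(|u|^{2})u \in L^{2}_{loc}(]0,T[;H^{\sigma/2}(M))$ from Remark \ref{Remarks} (ii), apply Proposition \ref{reg-propagacao-prop1} to gain regularity globally in $x$, and then upgrade the $L^{2}$-in-time gain to continuity in time by re-solving the Cauchy problem from a time $t_{0}$ at which $u(t_{0})$ already has the improved regularity. The only difference is that you make explicit the iteration required when $\rho>1/2$ (since Proposition \ref{reg-propagacao-prop1} gains at most half a derivative per pass) and the Moser/Duhamel estimates underlying the higher-regularity well-posedness, both of which the paper's one-paragraph proof leaves implicit.
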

\begin{proof}
The result holds if we show that $$u \in L^{2}_{loc}(0, T; H^{\frac{\sigma}{2} + \rho}(M)).$$ Indeed,  choosing $t_{0} \in ]0, T[$ such that $u(\cdot,t_{0}) \in H^{\frac{\sigma}{2} + \rho}(M) $ and solving \eqref{prop-reg-2} with this initial data, by uniqueness of  solution, we have that $u \in C([0, T]; H^{\frac{\sigma}{2} + \rho}(M))$. As observed on the item ii. of the Remark \ref{Remarks}, $P'(|u|^{2})u \in L^{2}(0, T; H^{\frac{\sigma}{2}}(M))$. Now, we are in the conditions of the Proposition \ref{reg-propagacao-prop1}, then we conclude that $u \in L^{2}_{loc}(0, T; H^{\frac{\sigma}{2} + \rho}(M))$ as desired, and thus, the proof of the corollary is complete.
\end{proof}

We finish this section with a unique continuation property for the fractional Schr\"odinger equation.

\begin{cor}\label{teoUCP}
Let $\omega \subset M$ be an open set 
satisfying assumptions $(\mathcal{A})$ and $(\mathcal{B})$. 
Consider $$u \in C([0, T]; H^{\frac{\sigma}{2}}(M))$$
solution of 
\begin{eqnarray}\label{eq-prop-reg}
i\partial_{t}u + \Lambda_{g}^{\sigma}u + P'(|u|^{2})u = 0
\end{eqnarray}
such that $\partial_{t}u = 0$ on $]0, T[ \times \omega$, then $u = 0$ on 
$]0, T[ \times M$. 
\end{cor}

\begin{proof}
Observe that $u$ satisfies the equation 
\begin{eqnarray*}
\Lambda^{\sigma}_{g}u + P'(|u|^{2})u = 0, \quad \text{on } ]0, T[ \times \omega. 
\end{eqnarray*} 
Since $\Lambda^{\sigma}_{g}$ is an elliptic pseudo-differential operator we have 
$u \in C^{\infty}(]0, T[ \times \omega)$. Hence, by using
Proposition \ref{prop-reg-proposition2},
follows that $u \in C^{\infty}(]0, T[ \times M)$. Thus, taking the time derivative
of \eqref{eq-prop-reg} and defining  $v:= \partial_{t}u$, we see that $v$ satisfies the following system
\begin{equation*}
\begin{cases}
 i\partial_{t}v + \Lambda_{g}^{\sigma}v + b_{1}(t,x)v + b_{2}(t, x)\overline{v} = 0, &(x,t)\in M\times ] 0, T[,
\\ v = 0,& (x,t)\in \omega \times ] 0, T[,
\end{cases}
\end{equation*}
with $b_{1}, b_{2} \in C^{\infty}(]0, T[ \times M)$. Therefore, by assumption $(\mathcal{B})$, we 
get $v = \partial_{t}u = 0$ on $]0, T[ \times M$. We conclude the proof multiplying 
equation \eqref{eq-prop-reg} by $\overline{u}$ and 
integrating over $M$, then
\begin{eqnarray*}
\int_{M}{|\Lambda_{g}^{\frac{\sigma}{2}} u|^{2}}dx + \int_{M}{P'(|u|^{2})|u|^{2}}dx = 0,
\end{eqnarray*}
which yields $u \equiv 0$, thanks to \eqref{p2}. This finishes the proof of Corollary \ref{teoUCP}.
\end{proof}

\subsection{Propagation of compactness}
The final part of this section is related to show how the result of propagation of compactness can be obtained from the construction of suitable microlocal defect measures. It is important to point out that the construction of the tangential microlocal defect measure is classical, in this way we infer from the reader the article \cite{Gerard} for an interesting overview of this topic. 

In what follows, let $T>0$ and consider $\{u_{n}\}:=\{u_{n}\}_{n\in\mathbb{N}}$ 
be a sequence in $C([0, T]; H^{\frac{\sigma}{2}}(M))$ such that
\begin{eqnarray} \label{seq-prop-comp1}
\sup_{t \in [0, T]}\|u_{n}(t)\|_{H^{\frac{\sigma}{2}}(M)} \leq C, \\\label{seq-prop-comp2}
\sup_{t \in [0, T]}\|u_{n}(t)\|_{L^{2}(M)} \to 0
\end{eqnarray}
and
\begin{eqnarray}
\int_{0}^{T}{\|Lu_{n}(t)\|_{H^{\frac{\sigma}{2}}(M)}^{2}}dt \to 0. \label{seq-prop-comp3}
\end{eqnarray}
Here, the operator $L$ is defined by $L= i \partial_{t} + \Lambda^{\sigma}_{g}$, with $\Lambda^{\sigma}_{g}$ defined as before. Thus, we are able to present our first result of propagation of compactness.

\begin{proposition}\label{propagacaocompacidade}
Let $\{u_{n}\}$ be a sequence satisfying \eqref{seq-prop-comp1}--\eqref{seq-prop-comp3}, and assume that 
\begin{eqnarray*}
u_{n} \to 0 \text{ in } L^{2}([0, T]; H^{\frac{\sigma}{2}}(\omega)),
\end{eqnarray*} 
with $\omega \subset M$ an open set satisfying assumption $(\mathcal{A})$. Then, there exists a subsequence of $\{u_{n}\}$, still denoted by the same index, such that $$u_{n} \to 0 \text{ strongly in }L^{\infty}(0, T; H^{\frac{\sigma}{2}}(M)).$$
\end{proposition}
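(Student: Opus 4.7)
My plan is to use a microlocal defect measure (MDM) argument combined with a Duhamel-type upgrade of convergence. Up to a subsequence, I associate to $\{u_n\}$ a nonnegative Radon measure $\mu$ on $(0,T) \times S^*M$ encoding its defect of strong $H^{\sigma/2}$-convergence: for every $\varphi \in C_0^\infty((0,T))$ and every classical pseudo-differential operator $B(x,D_x)$ of order $\sigma - 1$ with principal symbol $b$,
\begin{equation*}
\int_0^T \varphi(t) \bigl(B(x,D_x) u_n, u_n\bigr)_{L^2(M)}\, dt \longrightarrow \langle \mu, \varphi \otimes b\rangle.
\end{equation*}
The strong convergence $u_n \to 0$ in $L^\infty(0,T;L^2(M))$ guarantees $\mu$ is supported at infinite frequency, and its vanishing is equivalent to strong $L^2_{\rm loc}((0,T); H^{\sigma/2}(M))$-convergence of $u_n$ to zero.

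Next I establish invariance of $\mu$ under the Hamiltonian flow of $|\xi|_x^\sigma$. Setting $A(t,x,D_x)=\varphi(t)B(x,D_x)$ and computing $(L u_n, A u_n)-(A u_n, L u_n)$ yields
\begin{equation*}
\int_0^T \varphi(t)\bigl([B, \Lambda_g^\sigma] u_n, u_n\bigr)_{L^2}\, dt + i\int_0^T \varphi'(t)(Bu_n, u_n)_{L^2}\, dt = (L u_n, A u_n) - (A u_n, L u_n).
\end{equation*}
Since $\|L u_n\|_{L^2(0,T; H^{\sigma/2})} \to 0$ while $A u_n$ is bounded in $L^\infty(0,T; H^{1-\sigma/2}(M))$, the right-hand side tends to zero by duality. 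Invoking Remark \ref{remsym} to identify $\sigma_{\rm prin}([B,\Lambda_g^\sigma]/i)=\{|\xi|_x^\sigma,b\}$ and passing to the limit gives
\begin{equation*}
\langle \mu, \varphi\{|\xi|_x^\sigma, b\} + \varphi' b\rangle = 0,
\end{equation*}
i.e.\ $\mu$ is invariant under the lifted bicharacteristic flow of $|\xi|_x^\sigma$, which is a reparameterization of the geodesic flow on $S^*M$. The localized hypothesis $u_n \to 0$ in $L^2(0,T; H^{\sigma/2}(\omega))$ forces $\mu \equiv 0$ above $\omega$, and the Geometric Control Condition $(\mathcal{A})$ combined with flow-invariance propagates this to $\mu \equiv 0$ on $(0,T) \times S^*M$ (iterating on subintervals if the control time $T_0$ is larger than $T$). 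Hence $u_n \to 0$ strongly in $L^2_{\rm loc}((0,T); H^{\sigma/2}(M))$.

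To upgrade to convergence in $L^\infty(0,T; H^{\sigma/2}(M))$, I pick $t_n \in (0,T)$ with $\|u_n(t_n)\|_{H^{\sigma/2}(M)} \to 0$ (possible by the previous $L^2_{\rm loc}$-convergence) and apply Duhamel's formula
\begin{equation*}
u_n(t) = e^{i(t-t_n)\Lambda_g^\sigma} u_n(t_n) - i\int_{t_n}^t e^{i(t-s)\Lambda_g^\sigma}\, L u_n(s)\, ds.
\end{equation*}
Since $e^{it\Lambda_g^\sigma}$ is unitary on $H^{\sigma/2}(M)$ and $\|L u_n\|_{L^1(0,T; H^{\sigma/2})} \leq T^{1/2}\|L u_n\|_{L^2(0,T; H^{\sigma/2})} \to 0$, I conclude $\|u_n\|_{L^\infty(0,T; H^{\sigma/2})} \to 0$, as desired.

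The main obstacle will be the propagation step: the commutator identity must be justified rigorously via tangential pseudo-differential calculus at the correct regularity scale, and the duality pairings that kill the right-hand side require matching Sobolev orders precisely. A secondary subtlety is extending the vanishing of $\mu$ from neighborhoods of $\omega$ to all of $(0,T)\times S^*M$ when $T$ is shorter than the GCC control time $T_0$, which requires iterating the MDM argument on overlapping time subintervals.
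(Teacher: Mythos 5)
Your overall architecture coincides with the paper's: construct a microlocal defect measure, propagate its support along the (reparametrized) geodesic flow via a commutator identity with $\Lambda_g^{\sigma}$, kill the measure over $\omega$ and then everywhere by assumption $(\mathcal{A})$, and finally upgrade $L^{2}_{\mathrm{loc}}$ to $L^{\infty}$ convergence by a Duhamel/unitarity argument. Your last step (choosing $t_{n}$ with $\|u_{n}(t_{n})\|_{H^{\sigma/2}}\to 0$ and integrating the equation from $t_{n}$) is in fact a cleaner rendering of the paper's appeal to ``uniqueness and conservation of energy'', and your remark about iterating on overlapping time subintervals when $T<T_{0}$ addresses a point the paper leaves implicit.

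There is, however, a genuine flaw in your choice of pseudo-differential orders, and it breaks the argument precisely in the regime $\sigma>2$ that is the novelty of this paper. A defect measure detecting the failure of strong $H^{\sigma/2}$-convergence must be tested against operators of order $2\cdot\frac{\sigma}{2}=\sigma$, not $\sigma-1$. Indeed, with $B$ of order $\sigma-1$ one has $|(Bu_{n},u_{n})|\leq \|Bu_{n}\|_{H^{1-\sigma/2}}\|u_{n}\|_{H^{\sigma/2-1}}$, and since interpolating \eqref{seq-prop-comp1} with \eqref{seq-prop-comp2} gives $u_{n}\to 0$ strongly in $H^{s}(M)$ for every $s<\frac{\sigma}{2}$, every such quantity tends to zero: the measure you define is identically zero by construction, and its vanishing is \emph{not} equivalent to strong $L^{2}_{\mathrm{loc}}H^{\sigma/2}$ convergence, contrary to what you assert. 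If instead you repair the measure by taking test operators of order $\sigma$, then the propagation multiplier $B$ must have order $1$ (so that $[B,\Lambda_{g}^{\sigma}]$ has order $\sigma$), not $\sigma-1$: with order $\sigma-1$ the commutator has order $2\sigma-2>\sigma$, the pairing $([B,\Lambda_{g}^{\sigma}]u_{n},u_{n})$ is no longer controlled by the available bounds, and its principal symbol $\{|\xi|_{x}^{\sigma},b\}$ does not test the order-$\sigma$ measure. Your two choices are mutually consistent only when $\sigma=2$. The paper sidesteps this bookkeeping by setting $v_{n}=(1-\Delta_{g})^{\sigma/4}u_{n}$, building the measure with order-$0$ tangential operators acting on $v_{n}$, and propagating with multipliers of order $1-\sigma$, so that the commutator is again of order $0$; you should either adopt that normalization or correct the orders to $\sigma$ (measure) and $1$ (multiplier) throughout.
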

\begin{proof}
First, denote $\{v_{n}\}_{n \in \mathbb{N}}:=\{v_{n}\} = \{(1 - \Delta_{g})^{\frac{\sigma}{4}}u_{n}\}_{n\in\mathbb{N}}$. Thus, conditions \eqref{seq-prop-comp1}--\eqref{seq-prop-comp3} can be rewritten as 
 \begin{eqnarray} \label{seq-prop-comp4}
\sup_{t \in [0, T]}\|v_{n}(t)\|_{L^{2}(M)} \leq C, \\ \label{seq-prop-comp5}
\sup_{t \in [0, T]}\|v_{n}(t)\|_{H^{-\frac{\sigma}{2}}(M)} \to 0
\end{eqnarray}
and
 \begin{eqnarray} 
\int_{0}^{T}{\|Lv_{n}(t)\|_{L^{2}(M)}^{2}}dt \to 0. \label{seq-prop-comp6}
\end{eqnarray}
Since $$u_{n} \to 0 \text{ in } L^{2}(0, T; H^{\frac{\sigma}{2}}(\omega)),$$ 
we have that $$v_{n} \to 0 \text{ in }L^{2}(0, T; L^{2}(\omega)).$$
Therefore,  the result will be proved if we show that we can extract a subsequence of $\{v_{n}\}$, still denoted by the same index, such that $$v_{n} \to 0 \text{ in }L^{\infty}(0, T; L^{2}(M)).$$

To prove it, we split the proof into 3 steps. First, let us construct the microlocal defect measures.

\vspace{0.2cm}

\noindent\textbf{Step 1. Construction of the microlocal defect measure}

\vspace{0.2cm}
Under the assumptions \eqref{seq-prop-comp4} and \eqref{seq-prop-comp5}, by using the ideas contained in \cite{Gerard}, there exists  a 
tangential radon measure $\mu = \mu(t, x, \xi)$ such that 
\begin{eqnarray}\label{mdm}
(A(t, x, D_{x})v_{n}, v_{n})_{L^{2}(]0, T[ \times M)} \to \int_{[0, T] \times S^{*}M}{a(t, x, \xi)}d\mu(t, x, \xi),
\end{eqnarray}
for all zero-order tangential pseudo-differential operator $A = A(t, x, D_{x})$.

\vspace{0.2cm}

In the second step, we will prove that the microlocal defect measure describes precisely the information carried along the geodesics of $M$.

\vspace{0.2cm}

\noindent\textbf{Step 2. Propagation along the geodesics} 

\vspace{0.2cm}

Consider $\varphi \in C_{0}^{\infty}(]0, T[)$ and $B(x, D_{x})$ a pseudo-differential operator of order  $1 - \sigma$ with principal  symbol $b_{1-\sigma}$. Define, $$A(t,x, D_{x}) = \varphi B(x, D_{x})$$ and for 
 $\epsilon > 0$, $$A_{\epsilon} := \varphi B_{\epsilon} = Ae^{\epsilon \Delta_{g}}.$$ Moreover, denote
\begin{eqnarray*}
\alpha_{n}^{\epsilon} & = & (Lv_{n}, A_{\epsilon}^{*}v_{n})_{L^{2}} - (A_{\epsilon}v_{n}, Lv_{n})_{L^{2}}.
\end{eqnarray*} 

Note that, 
\begin{eqnarray*}
\left|(Lv_{n}, A_{\epsilon}^{*}v_{n}) \right| & \leq & \|Lv_{n}\|_{L^{2}L^{2}}\|A_{\epsilon}^{*}v_{n}\|_{L^{2}L^{2}} \\
& \leq & C\|Lv_{n}\|_{L^{2}L^{2}}\|v_{n}\|_{L^{2}H^{1-\sigma}}  \to 0.
\end{eqnarray*}
Analogously, we have  that
\begin{eqnarray*}
\left|(A_{\epsilon}v_{n}, Lv_{n}) \right| \to 0,
\end{eqnarray*}
and so $$\sup_{\epsilon > 0}\alpha_{n}^{\epsilon} \to 0.$$

On the other hand,
\begin{eqnarray*}
\alpha_{n}^{\epsilon} := ([A_{\epsilon}, \Lambda^{\sigma}_{g}]v_{n}, v_{n})_{L^{2}} - i ((\partial_{t}A_{\epsilon})v_{n}, v_{n})_{L^{2}}
\end{eqnarray*}
Note that,
\begin{eqnarray*}
\left|((\partial_{t}A_{\epsilon})v_{n}, v_{n})_{L^{2}} \right| & \leq & \|(\partial_{t}A_{\epsilon})v_{n}\|_{L^{2}H^{\frac{\sigma}{2}}}\|v_{n}\|_{L^{2}H^{- \frac{\sigma}{2}}} \\
& \leq & C\|v_{n}\|_{L^{2}H^{ - \frac{\sigma}{2} + 1}}\|v_{n}\|_{L^{2}H^{- \frac{\sigma}{2}}} \to 0.
\end{eqnarray*}
Therefore, this yields that
\begin{eqnarray*}
\sup_{\epsilon > 0}\left|((\partial_{t}A_{\epsilon})v_{n}, v_{n})_{L^{2}} \right|  \to 0.
\end{eqnarray*}
So, first taking $\epsilon \to 0$ and, after,  $n \to \infty$, we get that
 \begin{eqnarray*}
 (\varphi [B, \Lambda^{\sigma}_{g}]v_{n}, v_{n})_{L^{2}} \to 0
 \end{eqnarray*}
which means that,
\begin{eqnarray}\label{propg}
\int_{[0, T] \times S^{*}M}{\varphi(t)\{b_{1 - \sigma}, |\xi|_{x}^{\sigma} \}}d\mu = 0.
\end{eqnarray}

\vspace{0.2cm}

\noindent\textbf{Claim:} If $G_s$  denotes the geodesic flow on $S^{*}M =\left\{(x, \eta) \in T^{*} M,|\eta|_{x}^{2}=1\right\}$, condition \eqref{propg} implies that $\mu$ is invariant under the geodesic flow of $S^{*}M$, or equivalently, 
\begin{equation}\label{G_s}
G_s(\mu) = \mu.
\end{equation}

\vspace{0.2cm}

Indeed, first note that,
\begin{eqnarray*}
\{ b_{1 - \sigma}, |\xi|^{\sigma}_{x}\} = \left(\frac{\sigma}{2} \right)\{ b_{1 - \sigma}, |\xi|^{2}_{x}\}, 
\end{eqnarray*} 
on $S^{*}M$. Hence, by using \eqref{propg}, we conclude that,
\begin{eqnarray} \label{inv-metr}
\int_{[0, T] \times S^{*}M}{\varphi(t)\{b_{1-\sigma}, |\xi|^{2}_{x} \}}d\mu = 0.
\end{eqnarray}
Denote $H_{|\xi|^{2}_{x}}$ as the Hamiltonian flow associated with $p(x, \xi) = |\xi|^{2}_{x}$ and $$\Phi_{s}: (x_{0}, \xi_{0}) \mapsto (x_{s}, \xi_{s}), \quad s \in \mathbb{R},$$ the integral curves of $H_{|\xi|^{2}_{x}}$. Thus, we have that 
\begin{eqnarray*}
\frac{d}{ds}\int_{[0, T] \times S^{*}M}{\varphi(t)\left(b_{1-\sigma}\circ \Phi_{s}\right)}d\mu = \int_{[0, T] \times S^{*}M}{\varphi(t)\{b_{1-\sigma}, |\xi|^{2}_{x} \}}d\mu = 0.
\end{eqnarray*}
This identity precisely expresses property \eqref{G_s}  which guarantees that the claim holds.

\vspace{0.2cm}

Finally, on the third step, we prove the convergence of $\{v_n\}$, which implies the convergence of $\{u_n\}$ in all manifolds $M$, showing thus Proposition \ref{propagacaocompacidade}.

\vspace{0.2cm}

\noindent\textbf{Step 3. Convergence in $L^{\infty}(0, T; L^{2}(M))$}

\vspace{0.2cm}

From \eqref{mdm}, for all $f \in C^{\infty}(M)$, we have that
\begin{eqnarray*}
(fv_{n}, v_{n})_{L^{2}(]0, T[ \times M)} \to \int_{[0, T] \times S^{*}M}{f(x)}d\mu.
\end{eqnarray*}
Then, $$v_{n} \to 0 \text{ strongly in } L^{2}_{loc}(0, T; L^{2}(M))$$ if and only if $$\mu = 0, \quad \text{ on }S^{*}M\times[0, T].$$ In particular, by hypothesis of Proposition \ref{propagacaocompacidade}, we get $$v_{n} \to 0 \quad \text{ on } [0, T] \times \omega,$$ with $\omega \subset M$ an open set satisfying the assumption $(\mathcal{A})$. Therefore, holds that $\mu = 0$ on $[0, T] \times S^{*}M$, hence $$v_{n} \to 0 \text{ in }L^{2}_{loc}(0, T; L^{2}(M)),$$ which leads to $$u_{n} \to 0 \text{ in } L^{2}_{loc}(0, T; H^{\frac{\sigma}{2}}(M)).$$

We finish the proof by taking $t_{0} \in [0, T]$ such that $u_{n}(\cdot,t_{0}) \to 0$ in $H^{\frac{\sigma}{2}}(M)$ and $\{u_n\}$ solving the 
equation 
\begin{eqnarray*}
i \partial_{t}u_{n} + \Lambda^{\sigma}_{g}u_{n} = f_{n},
\end{eqnarray*}
with initial data $u_{n}(\cdot,t_{0}) \in H^{\frac{\sigma}{2}}(M)$. By uniqueness of solution and the conservation of energy, we get that $$u_{n} \to  0\text{ in }L^{\infty}(0, T; H^{\frac{\sigma}{2}}(M)),$$
which concludes
the proof of Proposition \ref{propagacaocompacidade}.
\end{proof}
The next result links the microlocal defect measures constructed in Proposition \ref{propagacaocompacidade}  with its analogue for the nonlinear problem.

\begin{proposition}\label{ppp}
Let $\{u_{n}\}$ be a sequence in $C([0, T]; H^{\frac{\sigma}{2}}(M))$ of solutions to
\begin{equation}
i \partial_{t}u_{n} + \Lambda_{g}^{\sigma}u_{n} + P'(|u_{n}|^{2})u_{n} = 0,
\end{equation}
such that $u_{n}(0)$ is weakly convergent to zero in $H^{\frac{\sigma}{2}}(M)$. Then $$Q(|u_{n}|^{2})u_{n} = P'(|u_{n}|^{2})u_{n} - P'(0)u_{n},$$ strongly converges to zero in $L^{2}(0, T; H^{\frac{\sigma}{2}}(M))$.
\end{proposition}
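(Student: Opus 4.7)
The plan rests on three ingredients: uniform bounds on $\{u_n\}$ in the Strichartz space $Y_T$ via conservation laws, strong convergence of $u_n$ to $0$ in mixed Lebesgue norms via mass conservation and compactness, and a Moser-type tame product estimate reducing the statement to a convergence of the form $\|u_n\|_{L^{4k}_t L^\infty_x} \to 0$.

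First, multiplying the equation by $\bar u_n$ and taking imaginary parts shows that $\|u_n(t)\|_{L^2(M)}$ is conserved, while multiplying by $\partial_t \bar u_n$ yields conservation of the energy $E(u_n(t))$ defined in \eqref{Func-En}. The weak convergence $u_n(0) \rightharpoonup 0$ in $H^{\sigma/2}(M)$ gives, by Banach--Steinhaus, a uniform bound on $\|u_n(0)\|_{H^{\sigma/2}}$, and hence (using the polynomial growth of $P$ and \eqref{p2}) a uniform bound on $E(u_n(0))$; conservation then delivers $\|u_n\|_{L^\infty(0,T; H^{\sigma/2}(M))} \leq C$. Applying the Strichartz estimate \eqref{St-Est} and the boot-strap of Remarks \ref{Remarks} item i.\ on a finite cover of $[0,T]$ gives $\|u_n\|_{Y_T} \leq C$, and by the Sobolev embedding $W^{\alpha, q}(M) \hookrightarrow L^\infty(M)$ built into the definition of $Y_T$, also $\|u_n\|_{L^p(0,T; L^\infty(M))} \leq C$. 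Meanwhile, the compact embedding $H^{\sigma/2}(M) \hookrightarrow L^2(M)$ converts $u_n(0) \rightharpoonup 0$ into $\|u_n(0)\|_{L^2(M)} \to 0$; mass conservation propagates this to $u_n \to 0$ strongly in $L^\infty(0,T; L^2(M))$. Interpolating with the $L^\infty_t H^{\sigma/2}$ bound gives $u_n \to 0$ in $L^\infty(0,T; H^s(M))$ for every $s < \sigma/2$, and Sobolev embedding (using $d < \sigma + 1$) yields $u_n \to 0$ in $L^\infty(0,T; L^r(M))$ for every $r$ up to the Sobolev exponent associated to $\sigma/2$.

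Since $Q(r) = P'(r) - P'(0)$ is a polynomial vanishing at $r=0$, we decompose $Q(|u_n|^2) u_n = \sum_{k=1}^{N} c_k\, |u_n|^{2k} u_n$ for some finite $N$ (depending on $d^\circ P$). The Moser/Kato--Ponce tame estimate in $H^{\sigma/2}(M)$ gives
\begin{equation*}
\bigl\|\,|u_n|^{2k} u_n\,\bigr\|_{H^{\sigma/2}(M)} \leq C_k\, \|u_n\|_{L^\infty(M)}^{2k}\, \|u_n\|_{H^{\sigma/2}(M)},
\end{equation*}
and Hölder in time then yields
\begin{equation*}
\bigl\|\,|u_n|^{2k} u_n\,\bigr\|_{L^2(0,T; H^{\sigma/2})} \leq C_k\, \|u_n\|_{L^\infty_t H^{\sigma/2}}\, \|u_n\|_{L^{4k}(0,T; L^\infty(M))}^{2k},
\end{equation*}
so the proof reduces to showing $\|u_n\|_{L^{4k}_t L^\infty} \to 0$ for each $k = 1, \ldots, N$.

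This last convergence is the principal technical point. If $d < \sigma$ then $H^{\sigma/2 - \varepsilon} \hookrightarrow L^\infty(M)$ for small $\varepsilon$ and the preceding step already gives $u_n \to 0$ in $C_t L^\infty$, closing the argument. In the borderline regime $\sigma \leq d < \sigma + 1$, where $H^{\sigma/2}(M)$ barely fails to embed into $L^\infty$, I would combine a Gagliardo--Nirenberg inequality on $M$,
\begin{equation*}
\|u\|_{L^\infty(M)} \leq C\, \|u\|_{L^r(M)}^{1-\theta}\, \|u\|_{W^{\alpha, q}(M)}^{\theta}, \qquad \theta = \frac{d/r}{\alpha - d/q + d/r},
\end{equation*}
with Hölder in time to obtain
\begin{equation*}
\|u_n\|_{L^{4k}_t L^\infty}^{4k} \leq C\, \|u_n\|_{L^{r_1}_t L^r}^{(1-\theta) 4k}\, \|u_n\|_{L^p_t W^{\alpha, q}}^{\theta\, 4k},
\end{equation*}
valid whenever $\theta\, 4k \leq p$. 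The second factor is uniformly bounded by the first step, while the first factor tends to zero by the strong $L^\infty_t L^r$ convergence above; the exponents can be matched by taking $r$ large (making $\theta$ small) and $p$ within the Strichartz admissibility. The delicate point is that the margin $\alpha > d/q$ combined with the admissibility $\tfrac{2}{p} + \tfrac{d}{q} = \tfrac{d}{2}$ forces $p < 2/(d-\sigma)$; verifying that this upper bound is compatible with $p > \theta\, 4k$ for every $k \leq N$ uses the implicit Cauchy-theory restriction $d - \sigma < 2/\max\{2d^\circ P - 1, 2\}$ built into the hypotheses of Theorem \ref{Teo-Prob-nh}, and is the main thing to check carefully. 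The remainder of the argument is routine conservation/Strichartz/compactness reasoning.
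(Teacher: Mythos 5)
Your proposal is correct and follows essentially the same route as the paper: a Moser-type estimate reduces the claim to controlling $\int_0^T\|u_n\|_{L^\infty}^{2k}\,dt$, which both arguments handle by interpolating $L^\infty$ between $L^q$ and $W^{\alpha,q}$, bounding the $W^{\alpha,q}$ factor by the Strichartz/$Y_T$ bound, and sending the $L^q$ factor to zero via mass conservation and the Rellich compactness of $H^{\sigma/2}(M)\hookrightarrow L^2(M)$. Your extra care about matching the exponents in the borderline regime $\sigma\leq d<\sigma+1$ is a point the paper passes over silently, but it does not change the strategy.
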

\begin{proof}
Observe that as $P'$ is a polynomial function we can estimate the norm of $Q(|u_{n}|^{2})u_{n}$ in $L^{2}(0, T; H^{\frac{\sigma}{2}}(M))$ by the terms of the form 
\begin{equation}
I_{k} := \int_{0}^{T}{\|u_{n}(t)\|_{L^{\infty}}^{2k}\|u_{n}(t)\|_{H^{\frac{\sigma}{2}}}^{2}}dt, \quad k \in \mathbb{N}.
\end{equation}
Since the norm of $u_{n}$ is  uniformly bounded in $H^{\frac{\sigma}{2}}(M)$, by conservation of energy, we get that
\begin{eqnarray}\label{aa}
I_{k} \leq C\int_{0}^{T}{\|u_{n}(t)\|_{L^{\infty}}^{2k}}dt.
\end{eqnarray}

To bound the right hand side of \eqref{aa}, note that if $(p,q)$ satisfies \eqref{St-Admssi} and $\infty > p > 2k$ we can get $\alpha' \in \left]\frac{2}{q}, \infty\right[$. Thus,
by using Sobolev embeddings and interpolation, yields that
\begin{eqnarray*}
\|u_{n}(t)\|_{L^{\infty}} \leq C\|u_{n}(t)\|_{W^{\alpha', q}} \leq C\|u_{n}(t)\|_{L^{q}}^{1- \theta}\|u_{n}(t)\|_{W^{\alpha, q}}^{\theta},
\end{eqnarray*} 
with $\alpha = s - \frac{1}{p} > \frac{d}{q}$ (see also \eqref{def-Yt}) and some $\theta \in [0, 1]$. Since $2k\theta < p$, by using Holder's inequality, we get that 
\begin{eqnarray}\label{aaa}
\int_{0}^{T}{\|u_{n}(t)\|_{L^{\infty}}^{2k}}dt \leq \left(\int_{0}^{T}{\|u_{n}(t)\|_{L^{q}}^{\beta}}dt \right)^{\delta} \left(\int_{0}^{T}{\|u_{n}(t)\|_{W^{\alpha, q}}^{p}}dt \right)^{\frac{2k\theta}{p}},
\end{eqnarray}
with $\beta, \delta > 0$. 

Observe that the second term of \eqref{aaa} is uniformly bounded in $n \in \mathbb{N}$ in view of Remark \ref{Remarks} item ii. On the other hand, the first term goes to zero, as $n \to \infty$, since we can estimate the norm of 
$u_{n}(t)$ in $L^{q}(M)$ in terms of its norm in $L^{2}(M)$ and $H^{\frac{\sigma}{2}}(M)$. Observe that, additionally, the norm of $u_{n}(t)$ is uniformly bounded by the energy estimate. Moreover, by the conservation of the mass, $\|u_{n}(t)\|_{L^{2}} = \|u_{n}(0)\|_{L^{2}}$, where $u_{n}(0) \to 0$ strongly in $L^{2}(M)$, since $u_{n}(0)\to 0$ weakly in $H^{\frac{\sigma}{2}}(M)$, which is compactly embedded in $L^{2}(M)$ by the Rellich Theorem. Summarizing, we have that
\begin{eqnarray*}
Q(|u_{n}|^{2})u_{n} \to 0, \quad \text{ as } k \to \infty,
\end{eqnarray*}
strongly in $L^{2}(0, T; H^{\frac{\sigma}{2}}(M))$.

This completes the proof, so Proposition \ref{ppp} is achieved.
\end{proof}

\begin{rem} We remark that, in terms of microlocal defect measures, Proposition \ref{ppp} asserts that the measure associated with the nonlinear problem and the one associated with the linear problem are equals. 
\end{rem}

\section{Stabilization for the fractional Schr\"odinger equation}\label{sec4}

The main goal of this section is to prove Theorem \ref{teo-stab}. First, observe that solution $u \in C(\mathbb{R}_{+}; H^{\frac{\sigma}{2}}(M))$  of \eqref{prob-diss1} obtained in Theorem \ref{Prob-diss-Teo} satisfies the semigroup property. Thus, in view of the energy identity \eqref{ene},  Theorem \ref{teo-stab} is a consequence of the following \textit{observability inequality}. 
\begin{proposition}
Under assumptions of Theorem \ref{teo-stab}, for every $T > 0$ there exists a 
constant $C > 0$ such that the inequality 
\begin{eqnarray}\label{observabilitty}
E(0) \leq C \int_{0}^{T}{\|(1 - \Delta_{g})^{-\frac{\sigma}{2}}a(x)\partial_{t}u(t) \|_{L^{2}(M)}^{2}}dt
\end{eqnarray}
holds for every solution $u=u(x,t)$ of the damped system \eqref{prob-diss} with the initial data $u_0$ satisfying $$\|u_{0}\|_{H^{\frac{\sigma}{2}}(M)} \leq R_{0},$$ for some $R_{0} > 0$.
\end{proposition}

\begin{proof}
We argue by contradiction. Suppose that \eqref{observabilitty} is not true, then there exists a sequence $\{u_n\}:=\{u_{n}\}_{n\in\mathbb{N}}$ of solutions to \eqref{prob-diss} such that 
\begin{eqnarray}\label{cont-1}
\|u_{n}(0)\|_{H^{\frac{\sigma}{2}}(M)} \leq R_{0}
\end{eqnarray}
and 
\begin{eqnarray}\label{cont-2}
\int_{0}^{T}{\|(1 - \Delta_{g})^{-\frac{\sigma}{2}}a(x)\partial_{t}u_{n}(t) \|_{L^{2}(M)}^{2}}dt \leq \frac{1}{n}E_{n}(0).
\end{eqnarray}
Denote $\alpha_{n} = \left(E_{n}(0) \right)^{\frac{1}{2}}$, . Thanks to \eqref{cont-1}, we have that $(\alpha_{n})$ is bounded, thus, we can extract a subsequence, which we shall not relabel,  such that $$\alpha_{n}\longrightarrow\alpha.$$ 
We split the analysis into two cases:  $\alpha > 0$ and $\alpha = 0$.

\vspace{0.1cm}

\noindent\textit{First case:} $\alpha_{n}\longrightarrow\alpha>0$. 

\vspace{0.2cm}
By using the fact that the energy decreases, we have that $(u_{n})$ is bounded in 
$L^{\infty}(0, T; H^{\frac{\sigma}{2}}(M))$. Additionally, 
\begin{eqnarray*}
\partial_{t}u_{n} = J^{-1}\left(i \Lambda_{g}^{\frac{\sigma}{2}}u_{n} +i P'(|u_{n}|^{2}u_{n}) \right),
\end{eqnarray*}
where $J$ is defined in Theorem \ref{Prob-diss-Teo}, hence $\{u_n\}$ is bounded in $C^{1}([0, T]; \mathcal{D}'(M))$. Therefore, we find a subsequence of $\{u_n\}$, still denoted by the same index, such that, 
for every $t \in [0, T]$, $$u_{n}(t) \rightharpoonup u(t),$$ for some $u \in C_{w}([0, T]; H^{\frac{\sigma}{2}}(M))$. Taking into account \eqref{cont-2} and passing to the limit in the system \eqref{prob-diss} we get that $u$ satisfies
\begin{equation}\label{prob-lim-est}
\begin{cases}
i \partial_{t} u  + \Lambda^{\sigma}_{g}u  + P'(|u|^{2})u= 0, &\text{ on } M\times ] 0, T[,
\\\partial_{t}u= 0,& \text{ on } \omega\times ] 0, T[.
\end{cases}
\end{equation}
Moreover, by using Remark \ref{Remarks} item ii., 
if $u \in L^{\infty}(0, T; H^{\frac{\sigma}{2}}(M))$ is a solution of \eqref{prob-lim-est} then $u \in L^{p}(0, T; L^{\infty}(M))$,
for every $p < \infty$, and the nonlinear term $P'(|u|^{2})u$ lies in $L^{2}(0, T; H^{\frac{\sigma}{2}}(M))$. 

By Duhamel formula and the uniqueness of the Cauchy problem for the fractional Schr\"odinger equation we conclude that $u \in C([0, T]; H^{\frac{\sigma}{2}}(M))$. Note that, taking into account assumption  $(\mathcal{B})$ and Theorem \ref{teoUCP}, the unique solution of \eqref{prob-lim-est} is the trivial one $u \equiv 0$. Hence, $\{u_{n}\}$ weakly converges to $0$.  A consequence of this convergence is that 
$$
Q(|u_{n}|^{2})u_{n} = P'(|u_{n}|^{2})u_{n} - P'(0)u_{n}\longrightarrow 0,
$$
strongly in $L^{2}(0, T; H^{\frac{\sigma}{2}}(M))$. On the other hand, by the contradiction hypothesis, $$a(x)(1 - \Delta_{g})^{- \frac{\sigma}{2}}a(x)\partial_{t}u_{n}\longrightarrow 0,$$  strongly in $L^{2}(0, T; H^{\frac{\sigma}{2}}(M))$. Summarizing, we get
\begin{eqnarray*}
&& \sup_{t \in [0, T]}\|u_{n}(t)\|_{L^{2}(M)} \longrightarrow 0, \\
&& \sup_{t \in [0, T]}\|u_{n}(t)\|_{H^{\frac{\sigma}{2}}(M)} \leq C\\
\end{eqnarray*}
and
\begin{eqnarray*}
&& i\partial_{t}u_{n} + \Lambda_{g}^{\sigma}u_{n} + P'(0)u_{n} \longrightarrow 0,
\end{eqnarray*}
with the last convergence in $L^{2}(0, T; H^{\frac{\sigma}{2}}(M))$. Therefore, we are in the conditions of Proposition \ref{propagacaocompacidade} and, then we can conclude that $$u_{n}\longrightarrow 0,$$ in $L^{\infty}(0, T; H^{\frac{\sigma}{2}}(M))$, which is a contradiction with the hypothesis $\alpha > 0$.

\vspace{0.1cm}

\noindent\textit{Second case:} $\alpha_{n}\longrightarrow\alpha=0$. 

\vspace{0.2cm}
Set $\{v_n\}_{n\in\mathbb{N}}:=\{v_{n}\} = \left\{ \frac{u_{n}}{\alpha_{n}}\right\}$. The new function satisfies 
 \begin{equation}\label{prob-cont-est-alpha0}
i \partial_{t} v_{n}  + \Lambda^{\sigma}_{g}v_{n}  + P'(|\alpha_{n}v_{n}|^{2})v_{n} - a(x)(1 - \Delta_{g})^{\frac{\sigma}{2}}a(x)\partial_{t}v_{n} =0, \quad \text { on } M\times] 0, T[ 
\end{equation}
and the estimate 
\begin{eqnarray}
\int_{0}^{T}{\|(1 - \Delta_{g})^{-\frac{\sigma}{4}}a(x)\partial_{t}v_{n}\|_{L^{2}(M)}^{2}}dt \leq \frac{1}{n}
\end{eqnarray}
while $\left\|v_{n}(0)\right\|_{H^{\frac{\sigma}{2}}(M)} \simeq 1$. Note that $\{v_{n}\}$ is 
bounded in $L^{\infty}(0, T; H^{\frac{\sigma}{2}}(M))$ and, by \eqref{prob-cont-est-alpha0}, we have $\{v_{n}\}$
bounded in $C^{1}([0, T]; \mathcal{D}'(M))$.
Hence, it admits a subsequence, still denoted by $\{v_{n}\}$, such that, for every $t \in [0, T]$, $$v_{n}(t) \rightharpoonup v(t),$$ for some $v \in C_{w}([0, T]; H^{\frac{\sigma}{2}}(M))$. 

Next, applying an estimate like presented in Remark \ref{Remarks} item ii. to \eqref{prob-cont-est-alpha0}, we get 
\begin{eqnarray*}
\|v_{n}\|_{Y_{T}} \leq C(1 + \alpha_{n}^{\mu}\|v_{n}\|_{Y_{T}}^{\beta}),
\end{eqnarray*}
where $\mu = 2 d^{\circ}P'$ and $\beta = 2d^{\circ}P - 1 $. Observe that  $\|v_{n}\|_{Y_{T}}$ depends continuously on $T$ and is bounded for $T = 0$. To conclude the proof we need to apply a boot-strap argument. Note that $F(t) := \|v_{n}\|_{Y_{t}}$ 
(fixing $n\in \mathbb{N}$) satisfies
\begin{eqnarray*}
F(t) \leq C(1 + \alpha_{n}^{\mu}F(t)^{\beta}).
\end{eqnarray*}
Now, we are in the condition of \cite[Lemma 2.2]{BaGe}, since $C \alpha_{n} \to 0$ and $F(0) = \|v_{0n}\|_{H^{\frac{\sigma}{2}}(M)}$ is  uniformly bounded with respect to $n \in \mathbb{N}$, or equivalently,
\begin{eqnarray*}
F(0) = \|v_{0n}\|_{H^{\frac{\sigma}{2}}(M)} \leq C, \forall n \in \mathbb{N}.
\end{eqnarray*}
Thus, taking $n \geq n_{0}$, for some $n_{0}$ large enough, we get that
\begin{eqnarray*}
F(0) \leq \frac{1}{\beta \alpha_{n}^{\frac{\mu}{\beta - 1}}},
\end{eqnarray*}
which yields
\begin{eqnarray*}
\|v_{n}\|_{Y_{T}} \leq C, 
\end{eqnarray*}
that is, $\|v_{n}\|_{Y_{T}}$ is uniformly bounded in $n \in \mathbb{N}$ and, therefore by Proposition \ref{ppp}, $$Q(|\alpha_{n}v_{n}|^{2})v_{n}\longrightarrow0 \text{ in  }L^{2}([0, T]; H^{\frac{\sigma}{2}}(M)).$$ Then, $v$ satisfies
\begin{equation*}
\begin{cases}
i \partial_{t}v + \Lambda^{\sigma}_{g}v + P'(0)v= 0, &\text{ on } M\times ] 0, T[,
\\\partial_{t}v= 0,& \text{ on } \omega\times ] 0, T[.
\end{cases}
\end{equation*}
Thanks to the uniqueness of the Cauchy problem for the linear fractional Schr\"odinger equation, we have that $v \in C([0, T]; H^{\frac{\sigma}{2}}(M))$, and so, $v = 0$ by unique continuation property. 

Finally, at this point, we can argue as in the first case: Applying Proposition \ref{propagacaocompacidade} we have that $v_{n} \to 0$ strongly in $C([0, T]; H^{\frac{\sigma}{2}}(M))$, which is a contradiction since $\|v_{0n}\|_{H^{\frac{\sigma}{2}}(M)}\simeq 1$, thus achieving the proof.
\end{proof}

\section{Exact controllability for the fractional Schr\"odinger equation }\label{sec5}

In this section we prove the exact controllability at level $H^{s}(M)$, with $s \geq \frac{\sigma}{2}$, for the nonlinear fractional Schr\"odinger equation, namely
\begin{equation}\label{Ncontrolsys} 
\begin{cases}
i \partial_{t} u  + \Lambda^{\sigma}_{g}u  + P'(|u|^{2})u = h,&   \text{ on } M\times ] 0, T[, \\
u(x,0)=u_0(x),& x\in M.
\end{cases}
\end{equation}
To achieve that issue, we use the classical duality approach \cite{DolRus1977,lions1}, which reduces the controllability problem associated to system \eqref{Ncontrolsys} to prove an observability inequality by using the so-called “Compactness-Uniqueness Argument” due to J.-L. Lions \cite{lions1} for solutions of the following linear system
\begin{equation}\label{controlsys} 
\begin{cases}
i \partial_{t} u  + \Lambda^{\sigma}_{g}u  = h,&   \text{ on } M\times ] 0, T[, \\
u(x,0)=u_0(x),& x\in M.
\end{cases}
\end{equation}
Finally, with the linear control problem in hand, the idea is to consider the control  operator associated to the nonlinear problem as a perturbation of the control operator associated  to the linear one.

\subsection{Controllability for the linear fractional Schr\"odinger equation}
The goal here is to prove the controllability for the following linear system,
\begin{equation}\label{Lcontrolsys} 
\begin{cases}
i \partial_{t} u  + \Lambda^{\sigma}_{g}u  = \tilde{h},&   \text{ on } M\times ] 0, T[, \\
u(x,0)=u_0(x),& x\in M.
\end{cases}
\end{equation}

Note that to show the results of this subsection we only need the set $\omega \subset M$, where the control is effective,  satisfying the assumption $(\mathcal{A})$. It is important to note that the compactness-uniqueness argument reduces the problem to prove a unique continuation property (UCP) for solutions of \eqref{Lcontrolsys}, however,  (UCP) required in this case is derived from the properties of second order elliptic operators due to H\"ormander \cite{Hormander}. 

We are now in a position to prove the observability inequality associated to \eqref{Lcontrolsys}. The result can be read as follows.

\begin{proposition}\label{PropControlLin}
Let $\omega \subset M$ be an open set 
satisfying the assumption $(\mathcal{A})$ and $a \in C^{\infty}(M)$ a real valued function such that $a \equiv 1$ 
on $\omega$. Then, for every $T> 0$, there exists a constant $C = C(T) > 0$ such that, for every solution of system \eqref{Lcontrolsys},
with $s \geq \frac{\sigma}{2}$, $\tilde{h} = 0$ and initial data $u(0)=u_{0} \in H^{-s}(M)$, we have the following inequality
\begin{eqnarray} \label{obs-linprob}
\|u_{0}\|_{H^{-s}(M)}^{2} \leq C \int_{0}^{T}{\|a u(t)\|_{H^{-s}(M)}^{2}}dt.
\end{eqnarray}
\end{proposition}
\begin{proof}
We split the proof into three steps, first we will prove an auxiliary inequality.

\vspace{0.2cm}

\noindent \textbf{Step 1.} We start by proving the following
 estimate 
 \begin{eqnarray} \label{obs-relaxed}
 \|u_{0}\|_{	H^{-s}(M)}^{2} \leq C\left(\int_{0}^{T}{\|a u(t)\|_{H^{-s}(M)}^{2}}dt + \| (1 - \Delta_{g})^{- \frac{s}{2} - \frac{\sigma}{4}}u_{0} \|_{H^{-s}(M)}^{2} \right),
 \end{eqnarray}
 for $s \geq \frac{\sigma}{2}$ and $u$ solution of \eqref{Lcontrolsys}. 
 
 \vspace{0.2cm}

 We argue by contradiction, suppose that \eqref{obs-relaxed} does not occur.  Then,  there exists a sequence $\{u^{n}\}:=\{u^{n}\}_{n\in\mathbb{N}}$ of solution of \eqref{Lcontrolsys}  satisfying
 \begin{equation}
 \|u_{0}^{n}\|_{H^{-s}(M)} = 1, \label{CL-1}
 \end{equation}
 for all $n \in \mathbb{N}$. Additionally, we have, when $n \to \infty$, that
 \begin{eqnarray}
 && a u^{n} \rightarrow 0 \quad \hbox{ in } \quad L^{2}(0, T; H^{-s}(M))  \label{CL-2}
 \end{eqnarray}
 and 
 \begin{eqnarray}
&& \|(1 - \Delta_{g})^{- \frac{s}{2} - \frac{\sigma}{4}}u_{0}^{n}\|_{H^{-s}(M)} \rightarrow 0. \label{CL-3}
 \end{eqnarray}

Note that $u^{n}$ is bounded in $L^{\infty}(0, T; H^{-s}(M))$. By using the first equation of \eqref{Lcontrolsys}, with $\tilde{h} = 0$, we have $i\partial_{t}u^{n} =- \Lambda^{\sigma}_{g}u^{n}$ and, consequently, $\partial_{t}u^{n}$ is bounded in 
$L^{\infty}(0, T; H^{-s - \sigma}(M))$. So, we may extract a subsequence (still denoted $\{u^n\}$) such that,
 for every $t \in [0, T]$, $$u^{n}(t) \rightharpoonup u(t) \text{ weakly in } H^{-s}(M),$$ 
  for some $u \in L^{\infty}(0, T; H^{-s}(M))$. Thanks to \eqref{CL-3} the sequence of initial data converges to $0$, so we can conclude, by passing the limit on \eqref{Lcontrolsys}, that $u \equiv 0$. 
  
  Now, let us introduce $w_{n}:= (1 - \Delta_{g})^{- \frac{s}{2} - \frac{\sigma}{4}}u^{n}$, $n \in \mathbb{N}$, so  $\{w_{n}\}$ satisfies the equation \eqref{Lcontrolsys} and $w_{n}(t)$ converges weakly to $0$ in 
  $H^{\frac{\sigma}{2}}(M)$, hence strongly to zero in $L^{2}(M)$, for every $t \in [0, T]$. 
 Note that by definition of $\{w_n\}$, we have the following
  \begin{eqnarray*}
  a w_{n} &=& a (1 - \Delta_{g})^{- \frac{s}{2} - \frac{\sigma}{4}}u^{n} \nonumber \\
  & = & (1 - \Delta_{g})^{- \frac{s}{2} - \frac{\sigma}{4}}(a u^{n})  + [a, (1 - \Delta_{g})^{- \frac{s}{2} - \frac{\sigma}{4}}](1 - \Delta_{g})^{\frac{s}{2} + \frac{\sigma}{4}}w_{n} \nonumber \\
  & := & I_{1} + I_{2} .
  \end{eqnarray*}   
By using \eqref{CL-2} and that $(1 - \Delta_{g})^{-\frac{s}{2} - \frac{\sigma}{4}}$ 
  is a continuous linear operator, it follows that $$I_{1} \longrightarrow 0 \quad \text{ in } \quad L^{2}(0, T; H^{\frac{\sigma}{2}}(M)).$$
 On the other hand,  since $[a, (1 - \Delta_{g})^{-\frac{s}{2} - \frac{\sigma}{4}}](1 - \Delta_{g})^{\frac{s}{2} + \frac{\sigma}{4}}$ is a pseudo-differential operator of order $-1$, we have that 
 $$I_{2} \longrightarrow 0 \quad \text{ in } \quad L^{2}(0, T; H^{\frac{\sigma}{2}}(M)).$$
 These convergences yields that 
 $$w_{n} \longrightarrow 0 \quad \text{ in } \quad L^{2}(0, T; H^{\frac{\sigma}{2}}(\omega)).$$ Therefore,  thanks to Proposition \ref{propagacaocompacidade}, we get 
  $w_{n} \rightarrow 0$ in $L^{\infty}(0, T; H^{\frac{\sigma}{2}}(M))$, which means that 
  $u^{n} \rightarrow 0$ in $L^{\infty}(0, T; H^{-s}(M))$, a contradiction with $\|u^{n}(0)\|_{H^{-s}(M)} = 1$. Thus, \eqref{obs-relaxed} holds.
  
\vspace{0.2cm}

\noindent  \textbf{Step 2. } Consider
  \begin{eqnarray*}
  \mathcal{N}(T) = \{u_{0} \in H^{-s}(M): a u = 0 \hbox{ on } ]0, T[ \times M\}.
  \end{eqnarray*}
We claim that $\mathcal{N}(T) = \{ 0 \}$. 

\vspace{0.2cm}

Indeed, first we note that assumption $(\mathcal{A})$ and  Proposition \ref{reg-propagacao-prop1} ensures that $\mathcal{N}(T) \subset C^{\infty}(M)$ 
  and  $u$ vanishes on $\omega$. Since the Laplace operator commutes with the 
  linear equation \eqref{Lcontrolsys}, with $\tilde{h} = 0$, we have 
  $$ i \partial_{t}(\Delta_{g}u) + \Lambda^{\sigma}_{g}(\Delta_{g}u) = 0$$
  and 
  $$ \Delta_{g}u(0) = \Delta_{g}u_{0},$$
  from which we can conclude that $\Delta_{g}(\mathcal{N}(T)) \subset \mathcal{N}(T)$. On the other hand, 
  by \eqref{obs-relaxed} we have that 
  \begin{eqnarray*}
  \|u_{0}\|_{H^{-s}(M)}^{2} \leq C\|(1 - \Delta_{g})^{ - \frac{s}{2} - \frac{\sigma}{4}}u_{0} \|_{H^{-s}(M)}^{2},
  \end{eqnarray*}
  which means that $\mathcal{N}(T)$ is finite dimensional subspace. Then if $\mathcal{N}(T) \neq \{ 0 \}$ there would exist $z_{0} \in \mathbb{C}$ and $u_{0} \in \mathcal{N}(T)$ such that 
  \begin{eqnarray*}
  \Delta_{g} u_{0} + z_{0}u_{0} = 0 \quad \text{ on } \quad\omega.
  \end{eqnarray*}
Therefore,  by unique continuation theorem for second order
elliptic operators (see  \cite[Theorem 17.2.6]{Hormander}), we conclude $u_{0} = 0$ on $M$, a contradiction. Thus, $\mathcal{N}(T)=\{0\}$.
  
  \vspace{0.2cm}

\noindent  \textbf{Step 3. } The last step is to remove the second term on the right hand side of \eqref{obs-relaxed}, that is to show \eqref{obs-linprob}.

  \vspace{0.2cm}

In fact, let us again argue by contradiction. Suppose that \eqref{obs-linprob} does not hold, thus there exists a sequence $\{u_{0}^{n}\}=\{u_{0}^{n}\}_{n\in\mathbb{N}}$ 
  such that 
  \begin{equation}\label{id}
  \|u_{0}^{n}\|_{H^{-s}(M)} = 1
  \end{equation}
  and
    \begin{equation}\label{id1}
\int_{0}^{T}{ \|a u_{n}(t)\|_{H^{-s}(M)}^{2}}dt \rightarrow 0.
  \end{equation}
Since $(1 - \Delta_{g})^{-\frac{s}{2} - \frac{\sigma}{4}}$ is a compact operator 
  on $H^{-s}(M)$ we may extract a subsequence, still denoted by the same index,  such that 
  \begin{eqnarray}\label{id2}
  (1 - \Delta_{g})^{-\frac{s}{2} - \frac{\sigma}{4}}u_{0}^{n} \rightarrow (1 - \Delta_{g})^{-\frac{s}{2} - \frac{\sigma}{4}}u_{0} \quad \hbox{ in } H^{-s}(M),
  \end{eqnarray}
  for some $u_{0} \in H^{-s}(M)$. Consider 
  $u$ solution of \eqref{Lcontrolsys} with initial data 
  $u_{0}$. By using \eqref{id1} we get that 
  \begin{eqnarray*}
   \int_{0}^{T}{\|a u(t)\|_{H^{-s}(M)}^{2}}dt =0,
  \end{eqnarray*}
  that is, $a u = 0$ on $]0, T[ \times M$. Therefore, it follows that $u_{0} \in \mathcal{N}(T)=\{0\}$.
  
   Finally, by Step 1, we have 
  \begin{eqnarray}\label{dd}
 \|u_{0}^{n}\|_{H^{-s}(M)}^{2} & \leq & \int_{0}^{T}{ \|a u_{n}(t)\|_{H^{-s}(M)}^{2}}dt +  C\|(1 - \Delta_{g})^{- \frac{s}{2} - \frac{\sigma}{4}}u_{0}^{n}\|_{H^{-s}(M)}^{2}.
  \end{eqnarray}
This inequality combined with \eqref{id2} give us that
   \begin{eqnarray*}
 1=\|u_{0}^{n}\|_{H^{-s}(M)}^{2} & \to & 0,
  \end{eqnarray*}
   which is a contradiction. Therefore, Proposition \ref{PropControlLin} is proved.
\end{proof}

Let us now give a first answer for the controllability problem for the linear problem proposed in this article.

\begin{teo}\label{th_lin} Under the assumptions of Proposition \ref{PropControlLin}, 
for every initial data $u_{0} \in H^{s}(M)$, with $s \geq \frac{\sigma}{2}$ and every $T > 0$, there exists a control $h \in C(\mathbb{R}_{+}; H^{s}(M))$, with support 
in $\mathbb{R}_{+} \times \omega$ such that the unique  solution of  \eqref{Lcontrolsys} with 
\begin{equation}
\tilde{h}=\begin{cases}
h,&\text{ if } 0 \leq t \leq T\\
0,&\text{ otherwise }
\end{cases}
\end{equation}
 satisfies $u(\cdot,t) = 0$  for $t \geq T.$
\end{teo}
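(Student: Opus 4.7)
The plan is to apply the Hilbert Uniqueness Method (HUM), with the observability inequality \eqref{obs-linprob} from Proposition \ref{PropControlLin} as the key ingredient. Since the free flow $e^{it\Lambda^{\sigma}_g}$ is unitary on every $H^r(M)$, by linearity and time-reversibility it suffices to exhibit, for any prescribed $u_1 \in H^s(M)$, a control $h$ supported in $[0,T]\times\omega$ driving the solution of $i\partial_t u + \Lambda^{\sigma}_g u = h$ with $u(0)=0$ to $u(T)=u_1$; the required control steering $u_0$ to zero at time $T$ is then recovered by choosing $u_1$ as a unitary image of $-u_0$.

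To carry this out I first fix a slightly smaller open set $\omega_0\subset\omega$ with $\overline{\omega_0}\subset\omega$ still satisfying assumption $(\mathcal{A})$, and a cutoff $\chi\in C_c^\infty(\omega)$ with $\chi\equiv 1$ on $\omega_0$. Applying Proposition \ref{PropControlLin} on $\omega_0$ yields
$$\|v_0\|_{H^{-s}(M)}^2 \leq C\int_0^T \|\chi v(t)\|_{H^{-s}(M)}^2\,dt$$
for every $v(t)=e^{it\Lambda^{\sigma}_g}v_0$ with $v_0\in H^{-s}(M)$. I then minimize on $H^{-s}(M)$ the HUM functional
$$J(v_0) = \tfrac{1}{2}\int_0^T \|\chi v(t)\|_{H^{-s}(M)}^2\,dt - \mathrm{Re}\,\langle u_1, v_0\rangle_{H^s,H^{-s}},$$
which is strictly convex, continuous, and coercive thanks to the observability estimate, hence admits a unique minimizer $v_0^*$. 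Writing the $H^{-s}$ inner product as the $L^2$ pairing composed with $(1-\Delta_g)^{-s}$, the Euler--Lagrange equation naturally produces the control candidate
$$h(t,x) = \chi(x)\,(1-\Delta_g)^{-s}\,\chi(x)\,v^*(t), \qquad t\in[0,T],$$
extended by zero for $t>T$, where $v^*(t)=e^{it\Lambda^{\sigma}_g}v_0^*$. A Duhamel integration by parts against an arbitrary free solution $w$ then identifies $\langle u(T), w_T\rangle$ with $\langle u_1, w_T\rangle$ up to the unitary factor $e^{iT\Lambda^{\sigma}_g}$, confirming $u(T)=u_1$. The regularity $h\in C(\mathbb{R}_+;H^s(M))$ and the localization $\supp h\subset[0,T]\times\omega$ follow at once from the unitarity of the propagator on $H^{-s}(M)$ and from the fact that $\chi(1-\Delta_g)^{-s}\chi$ is a bounded operator $H^{-s}(M)\to H^s(M)$ with range consisting of functions supported in $\omega$.

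The only genuinely delicate point is the bookkeeping of Sobolev exponents: observability is available in the $H^{-s}$-norm whereas the control is required to take values in $H^s$. The symmetrizing compensator $\chi(1-\Delta_g)^{-s}\chi$ resolves this mismatch, realizing the natural $H^s$--$H^{-s}$ duality so that coercivity of $J$ on $H^{-s}(M)$ produces an $H^s$-valued control localized in $\omega$. Once this setup is in place, the remaining arguments are standard linear duality, and no further unique-continuation input is needed, that ingredient having been fully absorbed into Proposition \ref{PropControlLin}.
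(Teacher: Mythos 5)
Your proposal is correct and follows essentially the same route as the paper: both implement HUM with the control operator $\varphi(1-\Delta_g)^{-s}\varphi$ acting on the adjoint flow, both shrink $\omega$ to a cutoff region still satisfying $(\mathcal{A})$, and both derive invertibility of the control map from the $H^{-s}$ observability inequality of Proposition \ref{PropControlLin}. The only cosmetic difference is that you phrase HUM variationally (minimizing the quadratic functional $J$) whereas the paper phrases it operator-theoretically (showing the self-adjoint map $\Gamma$ is coercive, hence an isomorphism from $H^{-s}(M)$ onto $H^{s}(M)$); these are equivalent formulations of the same argument.
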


\begin{proof}
By a compactness argument, there exists an open set $\omega'$ satisfying assumption $(\mathcal{A})$ such that $\overline{\omega}^{\prime} \subset \omega$. Let us consider $\varphi \in C^{\infty}(M)$ 
a real valued function supported in $\omega$ such that $\varphi \equiv 1$ on $\omega'$. Consider the systems 
\begin{equation}\label{dupla}
\begin{cases}
 i\partial_{t}u + \Lambda_{g}^{\sigma}u = h \in L^{1}(0, T; H^{s}(M)),\\
  u(T) = 0 
  \end{cases}
\end{equation}
and
\begin{equation}\label{duplaa}
\begin{cases}
i \partial_{t}v + \Lambda^{\sigma}_{g}v = 0,\\
 v(0) = v_{0} \in H^{-s}(M).
\end{cases}
\end{equation}
Multiplying \eqref{dupla} by $\overline{v}$ and integrating by parts, we get that
\begin{eqnarray}\label{duality}
\left<-iu_{0}, v_{0} \right> = \int_{0}^{T}{(h, v)_{L^{2}(M)}}dt, 
\end{eqnarray}
where $u_{0}=u(0)$ and $\left<\cdot,\cdot\right>$ is the duality between $H^{s}(M)$ and $H^{-s}(M)$.

Now consider  the following  continuous map 
$$\Gamma: H^{-s}(M) \rightarrow H^{s}(M),$$ defined by 
$\Gamma(v_{0}) = -iu_{0}$, with  the choice
\begin{eqnarray*}
h = Av = a(x)(1 - \Delta_{g})^{-s}a(x)v(x,t).
\end{eqnarray*}
By the duality relation \eqref{duality} and definition of $\Gamma$, we get that 
\begin{eqnarray*}
\left< \Gamma(v_{0}), v_{0}\right>& = & \int_{0}^{T}{(Av, v)_{L^{2}(M)}} 
\\
& = & \int_{0}^{T}{\|Bv(t)\|_{L^{2}(M)}^{2}}dt,
\end{eqnarray*}
where we denoted $Bv(t) = (1 - \Delta_{g})^{-\frac{s}{2}}a(x)v(t, x)$. In this way, 
\begin{eqnarray*}
\left<\Gamma(v_{0}), v_{0} \right> = \int_{0}^{T}{\|a v(t)\|_{H^{-s}(M)}^{2}}dt.
\end{eqnarray*}
Therefore, operator $\Gamma$ is self-adjoint and satisfies $$\|\Gamma(v_{0})\|_{H^{s}(M)} \geq C\|v_{0}\|_{H^{-s}(M)},$$ by virtue of the observability inequality \eqref{obs-linprob}. It, therefore, defines an isomorphism from $H^{-s}(M)$ to 
$H^{s}(M)$, which completes the proof of Theorem \ref{th_lin}.
\end{proof}

\subsection{Controllability for the nonlinear fractional Schr\"odinger equation}
We finish this section showing Theorem \ref{main1}. 
\begin{proof}[Proof of Theorem \ref{main1}]
We consider the following two systems 
\begin{equation}\label{dupla1}
\begin{cases}
 i \partial_{t}\phi + \Lambda^{\sigma}_{g}\phi + P'(0)\phi = 0, \\
\phi(0) = \phi_{0} \in H^{-s}
\end{cases}
\end{equation}
 and
 \begin{equation}\label{Sys-Cont-1}
\begin{cases}
i \partial_{t}u + \Lambda^{\sigma}_{g}u + P'(|u|^{2})u = A\phi, \\
u(T) = 0,
\end{cases}
\end{equation}
 where $A\phi := a(x)(1 - \Delta)^{-s}a(x)\phi$, for $s \geq \frac{\sigma}{2}$. Define, now, the following operator
 \begin{eqnarray*}
 L: H^{-s}(M)& \rightarrow &H^{s}(M)\\
\phi_{0} &\mapsto &L\phi_{0} = u_{0},
 \end{eqnarray*}
with $u_0=u(0)$.
 
\vspace{0.2cm}

\noindent\textbf{Claim 1.} $L$ is onto a small neighborhood of the origin of $H^{s}(M)$. 

\vspace{0.2cm}
Indeed,  first split $u$ into $u = v + \psi$ where $v$ satisfies 
\begin{equation}\label{Sys-v-1}
\begin{cases}
i \partial_{t}v + \Lambda^{\sigma}_{g}v + P'(0)v - Q(|u|^{2})u = 0, \\
v(T) = 0
\end{cases}
\end{equation}
 and $\psi$  satisfies 
 \begin{equation}\label{dupla2}
\begin{cases}
i \partial_{t}\psi + \Lambda^{\sigma}_{g}\psi + P'(0)\psi  = A\phi, \\
 \psi(T) = 0.
\end{cases}
\end{equation}
 
 Note that $u, v$ and $\psi$ belongs to $C([0, T]; H^{s}(M))$  and $u(0) = v(0) + \psi(0)$, that is,  
 \begin{eqnarray*}
 L\phi_{0} = K\phi_{0} + S\phi_{0} =u_0,
 \end{eqnarray*}
where $$K\phi_{0} := v(0).$$
Due to $S: H^{-s}(M) \rightarrow H^{s}(M)$ be the linear control isomorphism between $H^{-s}(M)$ and $H^{s}(M)$ (see Theorem \ref{th_lin}), we have that $L\phi_{0} = u_{0}$ 
 is equivalent to $\phi_{0} = - S^{-1}K\phi_{0} + S^{-1}u_{0}$.  Let us consider the following operator $$B: H^{-s}(M) \rightarrow H^{-s}(M)$$ defined by $$B\phi_{0} = - S^{-1}K\phi_{0} + S^{-1}u_{0}.$$ We get that $L\phi_{0} = u_{0}$ if 
 $\phi_{0}$ is a fixed point of $B$. Therefore, Claim 1 is equivalent to find a fixed point for the operator $B$ near the origin of $H^{-s}(M)$. Let us prove it. To do this, 
 we may assume $T < 1$. Since $S$ is a continuous linear operator, we have
 \begin{eqnarray*}
 \|B\phi_{0}\|_{H^{-s}(M)} &\leq& C\left(\|K\phi_{0}\|_{H^{s}(M)} + \|u_{0}\|_{H^{s}(M)} \right)\\
 & \leq & C \left(\|v(0)\|_{H^{s}(M)} + \|u_{0}\|_{H^{s}(M)} \right).
 \end{eqnarray*} 
By Strichartz estimates at level $H^{s}(M)$  applied to the system \eqref{Sys-v-1}, it follows that
 \begin{eqnarray*}
 \|B\phi_{0}\|_{H^{-s}(M)} \leq C\left(\int_{0}^{1}{\|Q(|u|^{2})u(t)\|_{H^{s}(M)}}dt + \|u_{0}\|_{H^{s}(M)} \right).
 \end{eqnarray*}
From estimate \eqref{contd-1} we deduce the following
 \begin{eqnarray*}
 \|B\phi_{0}\|_{H^{-s}(M)} \leq C\left(\|u\|_{Y_{1}}^{\beta} + \|u_{0}\|_{H^{s}(M)} \right). 
 \end{eqnarray*}
 On the other hand, taking $$\|\phi_{0}\|_{H^{-s}(M)} \leq R,$$ with $R \leq 1$ small enough, Remark \ref{Remarks} applied to the system \eqref{Sys-Cont-1}, ensures that
 \begin{eqnarray*}
 \|u\|_{Y_{1}} & \leq & C \sup_{0 \leq t \leq 1}\|A\phi\|_{H^{s}(M)} \\
 & \leq & C\|\phi_{0}\|_{H^{-s}(M)}.
 \end{eqnarray*}
Therefore, 
 \begin{eqnarray*}
 \|B\phi_{0}\|_{H^{-s}(M)} & \leq & C\left[\left(\sup_{0 \leq t \leq 1}\|A\phi\|_{H^{s}(M)} \right)^{\beta} + \|u_{0}\|_{H^{s}(M)} \right] \\
 & \leq & C \left(\|\phi_{0}\|_{H^{-s}(M)}^{\beta} + \|u_{0}\|_{H^{s}(M)}  \right).
 \end{eqnarray*}
 Choosing  $u_{0}$ such that $$\|u_{0}\|_{H^{s}(M)} \leq \frac{R}{2C},$$ we have 
 $$\|B\phi_{0} \|_{H^{-s}(M)} \leq R,$$ which means that operator $B$ reproduces the ball $B_{R}$ in $H^{-s}(M)$. 
 
 It remains to prove that $B$ is a contraction, so, the proof of Claim 1 is achieved proving the following: 
 
 \vspace{0.2cm}

\noindent\textbf{Claim 2.} $B$ is a contraction on a small ball $B_{R}$ of $H^{-s}(M)$.

\vspace{0.2cm}

 Indeed,  let us consider the systems,
 \begin{equation*}
\begin{cases}
 i\partial_{t}(v_{1} - v_{2}) + \Lambda^{\sigma}_{g}(v_{1} - v_{2}) + P'(0)(v_{1} - v_{2}) = Q(|u_{1}|^{2})u_{1} - Q(|u_{2}|^{2})u_{2}, \\
(v_{1} - v_{2})(T) = 0
\end{cases}
\end{equation*}
 and
 \begin{equation*}
\begin{cases}
i\partial_{t}(u_{1} - u_{2}) + \Lambda^{\sigma}_{g}(u_{1} - u_{2}) = P'(|u_{2}|^{2})u_{2} - P'(|u_{1}|^{2})u_{1} + A(\phi^{1}) - A(\phi^{2}) , \\
(u_{1} - u_{2})(T) = 0.
\end{cases}
\end{equation*}
 Hence, 
 \begin{eqnarray} \label{control1}
 \begin{split}
 \|B(\phi^{1}_{0}) - B(\phi_{0}^{2})\|_{H^{-s}(M)} &\leq C \|(v_{1} - v_{2})(0)\|_{H^{s}(M)} \\
&\leq  C\left(\|u_{1}\|_{Y_{1}}^{\beta - 1} + \|u_{2}\|_{Y_{1}}^{\beta - 1} \right)\|u_{1} - u_{2}\|_{Y_{1}}.
\end{split}
 \end{eqnarray}
 On the other hand, a similar estimate applied to $u_{1} - u_{2}$ yields that
 \begin{eqnarray*}
 \|u_{1} - u_{2}\|_{Y_{1}} & \leq & C\left(\|u_{1}\|_{Y_{1}}^{\beta - 1} + \|u_{2}\|_{Y_{1}}^{\beta - 1} \right)\|u_{1} - u_{2}\|_{Y_{1}} + C\sup_{0 \leq t \leq 1}\|A(\phi^{1} - \phi^{2})\|_{H^{s}(M)} \\
 & \leq & C2R^{\beta - 1}\|u_{1} - u_{2}\|_{Y_{1}} + C\sup_{0 \leq t \leq 1}\|A(\phi^{1} - \phi^{2})\|_{H^{s}(M)}.
 \end{eqnarray*}
Taking $R$ small enough, we get
 \begin{eqnarray} \label{control2}
 \|u_{1} - u_{2}\|_{Y_{1}} \leq C \sup_{0 \leq t \leq 1}\|A(\phi^{1} - \phi^{2})(t)\|_{H^{s}(M)}.
 \end{eqnarray}
Finally, combining \eqref{control1} and \eqref{control2}, we conclude that
$$
\|B(\phi_{0}^{1}) - B(\phi_{0}^{2})\|_{H^{-s}(M)}  \leq CR^{\beta - 1}\sup_{0 \leq t \leq 1}\|A(\phi^{1} - \phi^{2})\|_{H^{s}(M)} \leq  CR^{\beta - 1}\|\phi_{0}^{1} - \phi_{0}^{2}\|_{H^{-s}(M)}.
$$
Thus, $B$ is a contraction on a small ball $B_{R}$ of $H^{-s}(M)$. Therefore, Claim 2 is proved and, consequently, Claim 1 holds. Thus, the proof of Theorem \ref{main1} is completed.
\end{proof}

\section{Concluding remarks and open problems}\label{sec6}
This manuscript deals with the nonlinear fractional Schrödinger equation (or generalized  nonlinear Schrödinger equation) with a forcing term $h=h(x,t)$ added to the equation as a control input, namely
\begin{equation}\label{prob-intro_aaaa}
\begin{cases}
i \partial_{t} u+\Lambda^{\sigma}_{g}u +P'(|u|^{2})u=h,&\quad \text{ on } M\times ]0, T[,
\\ u(x,0)=u_0(x),&\quad x\in M,
\end{cases}
\end{equation}
where $\sigma\in[2,\infty)$, $M$ is a compact Riemannian manifold with dimension $d<[\sigma] +1$. To conclude our article, let us discuss some general aspects of this work on the control theory for the NLS-like equation and its variations on manifolds. 

As mentioned at the beginning of the introduction, the well-posedness and controllability of the NLS on manifolds has been extensively studied in recent years. Given the relevance of the physical models associated with the generalized Schr\"odinger equation it would be natural to consider the same kind of problems for the generalized models.   

The pioneer work related with well-posedness for the fractional Schrödinger and fractional
wave equations on compact manifolds without boundary is due to Dinh in \cite{Dinh}.  He gave the first step by establishing Strichartz estimates for this model. Precisely,  he showed  Strichartz estimates for the fractional Schrödinger equations on compact Riemannian manifolds without boundary $M$ endowed with a smooth bounded metric $g$. In fact, the result extends the well-known Strichartz estimate for the Schr\"odinger equation given in \cite{Burq-Gerard-Tz}.

With this nice result in hand, in our work, we establish controllability results for the solutions of \eqref{prob-intro_aaaa}. The result studied here extends the result presented in \cite{dehman-gerard-lebeau} in the following sense. Considering $\sigma=2$ we recovered the well known result for NLS proved by Dehman \textit{et al.} in \cite{dehman-gerard-lebeau}. Additionally, our results ensure a more general framework related with the controllability problem for the NLS-like equations (see, for instance, \cite{CaCa} for a simple case,  $\sigma=4$). In summary, in this manuscript, we are able to prove control results for the generalized Schr\"odinger equation on any compact Riemannian manifold of dimension $d<[\sigma] +1$ in which the Strichartz estimates guarantees the uniform well-posedness and the control operator acts on a region satisfying the \textit{Geometric Control Condition}.

Even in this general context, there are still open problems to be investigated and this work opens up numerous possibilities for study, such as:

\vspace{0.2cm}

\noindent\textbf{Problem $\mathcal{A}$}: Is it possible to prove  similar well-posedness and  control results for the fractional Schr\"odinger equation \eqref{prob-intro_aaaa} on manifolds of dimension $d \geq [\sigma] +1$? 
 
\vspace{0.1cm}

 In this case, it is necessary to mention that bilinear Strichartz estimates, similar to the ones proved in \cite{Burq2}, should be proved in order to establish the uniform well-posedness result and, consequently, to use a variation of the techniques employed in this work to achieve the answer for this question.

\vspace{0.1cm}

To finish our discussion, we would like to mention facts related with the  \textit{Geometric Control Condition (GCC)}, introduced at the beginning of the work. The exact controllability is known to be true when geometric control condition is realized for NLS, see for instance, Lebeau \cite{Lebeau}, but also for any open set $\omega$ of $\mathbb{T}^{n}$, $n\in\mathbb{N}$
, see Jaffard \cite{Jaffard} and Komornik and Loreti \cite{KoLo}. Additionally, the exact controllability holds also for general manifolds considering (GCC), see for instance, Laurent \cite{Laurent-siam}. Moreover,  in some geometrical settings, as it was shown in \cite{Macia2}, (GCC) is equivalent of the observability property. In what concerns the fractional model, recently in \cite{Zhu}, it was proved that in the bi-dimensional torus $\mathbb{T}^{2}$, with the flat topology, 
(GCC) is not only sufficient but also necessary to achieve the desirable controllability results for the fractional Schrodinger equation with $\sigma \in (1, 2)$.  Thus, taking advantage of the results presented in this paper together with the result showed in \cite{Zhu}, we raise the following questions concerning the fractional Schrodinger equation:

\vspace{0.2cm}

\noindent\textbf{Problem $\mathcal{B}$}: Are there relation between geometric control condition and the controllability results for the generalized Schr\"odinger system? Are there some geometrical settings that geometric control condition is equivalent to the observability property for the generalized Schr\"odinger system? 

\vspace{0.2cm}

Precisely, both questions can be summarized in the following issue: 

\vspace{0.2cm}

\noindent\textbf{Problem $\mathcal{C}$}: Could one prove control results for the nonlinear fractional Schr\"odinger equation only with assumption $\mathcal{A}$?

\subsection*{Acknowledgments} 
R. de A. Capistrano--Filho was supported by CNPq 408181/2018-4, CAPES-PRINT 88881.311964/2018-01, CAPES-MATHAMSUD 88881.520205/2020-01, MATHAMSUD 21-MATH-03 and Propesqi (UFPE). A. Pampu was partially supported by PNPD-CAPES grant number 88887.351630/2019-00.

\end{document}